\documentclass[onefignum,onetabnum]{siamart171218}

\usepackage{amsfonts}
\usepackage{graphicx}
\usepackage{caption}
\usepackage{subcaption}
\usepackage{mathtools}
\usepackage{amssymb}
\usepackage{multirow}
\usepackage{amsmath}
\usepackage{algorithm}
\usepackage{algpseudocode}
\usepackage{hyperref}
\usepackage{xcolor}
\definecolor{darkgreen}{cmyk}{0.7, 0, 1, 0.2}
\usepackage{array}
\usepackage{mathabx}
\usepackage{bm}
\usepackage[noadjust]{cite} 
\usepackage{comment}
\usepackage{amsopn}
\usepackage{enumitem}
\usepackage{placeins}
\usepackage{csquotes}
\usepackage{float}

\ifpdf
\DeclareGraphicsExtensions{.eps,.pdf,.eps,.jpg}
\else
\DeclareGraphicsExtensions{.eps}
\fi

\newtheorem{Remark}[theorem]{Remark}



\headers{Stable Block Solutions for Ill-conditioned Linear Systems}{Suvendu Kar and Murugesan Venkatapathi}

\title{A fast solver for ill-conditioned linear systems using randomized stable solutions of its blocks}


\author{Suvendu Kar \and Murugesan Venkatapathi\thanks{Department of Computational \& Data Sciences, Indian Institute of Science, Bangalore. (\email{suvendukar@iisc.ac.in} \and \email{murugesh@iisc.ac.in})}}


\makeatletter
\newcommand*{\addFileDependency}[1]{
	\typeout{(#1)}
	\@addtofilelist{#1}
	\IfFileExists{#1}{}{\typeout{No file #1.}}
}
\makeatother


\DeclarePairedDelimiter\norm{\lVert}{\rVert}
\hfuzz 400pt

\begin{document}

\maketitle
	
\begin{abstract}
We present an enhanced version of the row-based randomized block-Kaczmarz method to solve a linear system of equations. This improvement makes use of a regularization during block updates in the solution, and a dynamic proposal distribution based on the current residual vector and effective mutual orthogonality between all blocks. The improved method provides significant gains in solving highly ill-conditioned linear systems that are either sparse, or dense least-squares problems that are significantly over/under determined. Considering the poor guarantees in effectively preconditioning iterative solutions for such ill-conditioned problems, it may also serve as a pre-solver for accelerating other iterative numerical methods, and as an inner iteration in certain types of GMRES solvers for linear systems.
\end{abstract}
	
	\begin{keywords}
		Condition number, Orthogonal block-Kaczmarz method, Preconditioners, GMRES, Initial solution.       
	\end{keywords}
	
	\begin{AMS}
		65F08, 65F10, 65F20, 68W20.
	\end{AMS}

\section{Introduction}
In solving a consistent linear system of equations
\begin{equation}
\label{eq:linear_system}
    Ax=b
\end{equation}
where $A \in \mathbb{R}^{m \times n}, \text{and} \quad b\in \mathbb{R}^{m\times1} $ are given, Kaczmarz-type algorithms solve for $x \in \mathbb{R}^{n\times1}$ sequentially enforcing one equation of the system in every iteration \cite{1,2,3,4,5,6,7,8}. This procedure dates back to its original formulation in 1937 \cite{1} and it later resurfaced in tomographic reconstruction as the Algebraic Reconstruction Technique (ART) \cite{9}. The randomized Kaczmarz method was later introduced to enhance its performance, achieving an \textit{expected} exponential convergence rate \cite{2}. A greedy randomized Kaczmarz scheme was also developed, delivering significantly faster convergence for obtaining approximate solutions to \autoref{eq:linear_system} \cite{3}.

A flexible GMRES solver preconditioned by Kaczmarz‐type inner iterations was recently proposed for \autoref{eq:linear_system} \cite{10}. This FAB‐GMRES framework \cite{10,11,12}, uses inner Kaczmarz iterations as a preconditioner and can deliver markedly faster convergence than manually designed preconditioners. To avoid the evaluation of $AA^T$ in the above, the use of block solutions was introduced in a preconditioned orthogonal block-Kaczmarz routine for the same \cite{14}. The Reverse Cuthill–McKee method \cite{13} played a significant role in the formulation of this routine. The approach utilizes a block partitioning criterion based on the cosine of angles between two block matrices and operates through two orthogonal projections onto nearly orthogonal hyperplanes in every iteration, ensuring that successive error reduction occurs in complementary directions. Thus it converges much more rapidly than the prior block based methods, both in theory and practice, while it is limited to square systems. To overcome the restriction to square linear systems, a Simple Orthogonal Block-Kaczmarz (SOBK) method was introduced and subsequently SOBK was embedded as an inner preconditioner within flexible GMRES to address ill-conditioned problems \cite{0}.

We propose enhancements in the block-Kaczmarz method by 1) including residual-based dynamic aggregation into a block in each iteration, 2) generalizing the idea of mutual orthogonality between two blocks to an effective orthogonality of a given block with \emph{all} other blocks, and 3) by incorporating a regularization in the iterations, rendering it more stable \cite{ReRBK}. This effective utilization of regularization and orthogonality of blocks significantly enhances the rate of convergence in our proposed method, referred here as the Regularized Orthogonality and Residual based Block-Kaczmarz (ROR-BK) method.

The proposed method offers a promising alternative for solving linear systems 
$Ax=b$, particularly when the matrix 
$A$ is ill-conditioned. The approach achieves efficient approximations of the solution without the need for explicit preconditioning. Although preconditioners are designed to reduce the condition-number of the matrix, this does not necessarily reduce the condition-number of the problem\footnotemark[1], which is critical for fast convergence \cite{ErrorEstimator}. Preconditioner implementations are subject to numerical instability in practice, particularly for large condition numbers, as they require solving auxiliary systems or applying approximate inverses that can accumulate round-off errors. These limitations become more pronounced with larger or highly ill-conditioned matrices, where preconditioners may experience pivot breakdown, require excessive fill-in for stability, or lose their conditioning properties, often becoming ineffective precisely when robust acceleration is most needed \cite{shortcomingofpreconditioner1,shortcomingofpreconditioner2,shortcomingofpreconditioner3}. Furthermore, many standard preconditioning techniques may fail with non-symmetric and indefinite matrices \cite{LimitationOfPreconditioner}. Also, note that a minimization of the norm of the residual in such highly ill-conditioned problems may not result in a corresponding convergence in the relative error.

\footnotetext[1]{When solving for $x$ given $A,b$ in a linear system of equations $Ax=b$, the condition number of the problem is defined as $\frac{\|A^{\dagger}\|\|b\|}{\|x_\star\|}$. In the results of this paper, the average condition number of the problems solved are reported as 'prob-cond' for any given matrix in tables 2-7. Similarly, the condition number of the problem in evaluating $b$ given $A,x$ is $\frac{\|A\|\|x_\star\|}{\|b\|}$. Any of these condition numbers may strongly affect convergence depending on the iterative method used.}

In contrast, the partitioning of the given linear system into smaller blocks of equations, as in the block-Kaczmarz method, may distribute the highly covariant linear equations among the different blocks rendering each of them potentially well conditioned. This gainful distribution into reasonably well-conditioned blocks is typically done using a trivial aggregation of contiguous rows; note that other methods of optimal aggregation of rows into blocks with reordering based on mutual orthogonality may incur an unviable $\mathcal{O}(m^2n)$ arithmetic effort. These block equations typically solved using a Moore-Penrose pseudo-inverse can nevertheless be poorly conditioned in some cases. Alternately, one may also solve a subset of equations that have the maximum residuals in the current iteration, using a dynamic aggregation of these equations into a block, thus improving convergence. But, these dynamic blocks can also yield ill-conditioned least-square problems.

In our work, we use both the residual-based dynamic blocks, and the fixed blocks of contiguous equations that are randomly sampled based on their effective orthogonality with all other blocks. Importantly, we incorporate regularization in all the above block solutions. Updating solutions preferably using the more orthogonal blocks provides us an additional layer of stability in the solutions over the regularization. This effective amalgamation of these concepts in a simple new form provides fast and stable convergence for high-condition number linear systems in general, without the requirement of a pre-conditioner. The proposed ROR-BK method is also beneficial for an approximate weighted least-squares solution of a system of equations $Ax=b$ (see \autoref{thm:dobk_for_least_square_problems} in the appendix for more details).

The remainder of this paper is organized as follows. In \autoref{sec:methods}, we introduce the proposed ROR-BK algorithm and provide a theoretical convergence analysis of the proposed improvements. In  \autoref{sec:numerical_experiments}, we report a series of numerical experiments comparing the proposed method to several existing randomized methods including the SOBK method \cite{0}, and the other Krylov subspace iterative solvers including LSQR and versions of GMRES to illustrate its effectiveness and robustness. Finally, \autoref{sec:conclusion} offers a summary and concluding remarks on the work. In the appendix, we present a few relevant algorithms including the evaluation of an optimal initial solution for iterative solvers.

\subsection{Notation}
The symbol $A^{\dagger}$ denotes the pseudo-inverse of matrix $A \in \mathbb{R}^{m \times n}$. The notation \( A_{(i)} \) and \( A^{(i)} \) represents the \( i^{th} \) row and column of the matrix \( A \), respectively. \(\langle .\) \(, . \rangle\) represents an inner product defined by \(\langle v_1\) \(, v_2 \rangle\)$=v_1^Tv_2$. Unless specifically mentioned, $\|.\|$ as well as $\|.\|_2$  denotes 2-norm and $\|.\|_F$ denotes Frobenius norm. $x_\star$ denotes the solution minimizing the norm of the residual of \autoref{eq:linear_system}, unless specifically mentioned.

\section{Methods}\label{sec:methods}
First, we recall the Kaczmarz method of solving a linear system of equations. This method uses a single row numbered $i_k$ in its update at the $k^{th}$ iteration:
\begin{align}
x_{k+1} &= x_{k} + \frac{b_{i_k} - \langle A_{(i_k)}, x_{k} \rangle}{\|A_{(i_k)}\|_2^2} A_{(i_k)} \label{eq:update_rule_kaczmarz}
\end{align}
solving the following minimization problem:
\begin{align*}
    x_{k+1}=\text{arg min}_x\|x-x_{k}\|_2^2 \text{  with } \langle A_{(i_k)},x\rangle=b_{(i_k)}.
\end{align*}

This results in an orthogonal projection onto the space defined by the constraint \( \{ x : \langle A_{(i_k)}, x \rangle = b_{i_k} \} \). Equivalently, it exactly solves a given single equation, with a minimum perturbation in a least-squares sense to the current solution $x_k$. In contrast, the block Kaczmarz method employs a block of rows (denoted as \( B \)) for the solution update, given by:
\begin{align}
    x_{k+1} & = x_{k} + B^\dagger (b_{B}-B x_{k})\label{eq:update_rule_block_kaczmarz}
\end{align}
solving, 
\begin{align*}
    x_{k+1}=\text{arg min}_x\|x-x_{k}\|_2^2 \text{  with } Bx=b_{B}.
\end{align*}

Here, \( b_{B} \) represents the sub-vector of \( b \) corresponding to the row indices used to form the block \( B \) from \( A \). Thus, the block method solves a set of multiple equations for a given full rank $B$ and results in an orthogonal projection onto the space defined by the constraints \( \{ x : B x = b_{B} \} \), which corresponds to the intersection of multiple constraint hyperplanes (see \autoref{fig:graph_block_kaczmarz}). \autoref{eq:update_rule_block_kaczmarz} also indicates that the additive update to the solution vector $x_k$ solves a (typically under determined) linear least-square problem of a block of equations and its residual using the pseudo-inverse $B^\dagger$. This ensures that the 2-norm perturbation to the iterate $x_k$ is minimal while also satisfying the new constraints of the current iteration. 

\begin{figure}
    \centering   \includegraphics[width=7cm, height=7cm]{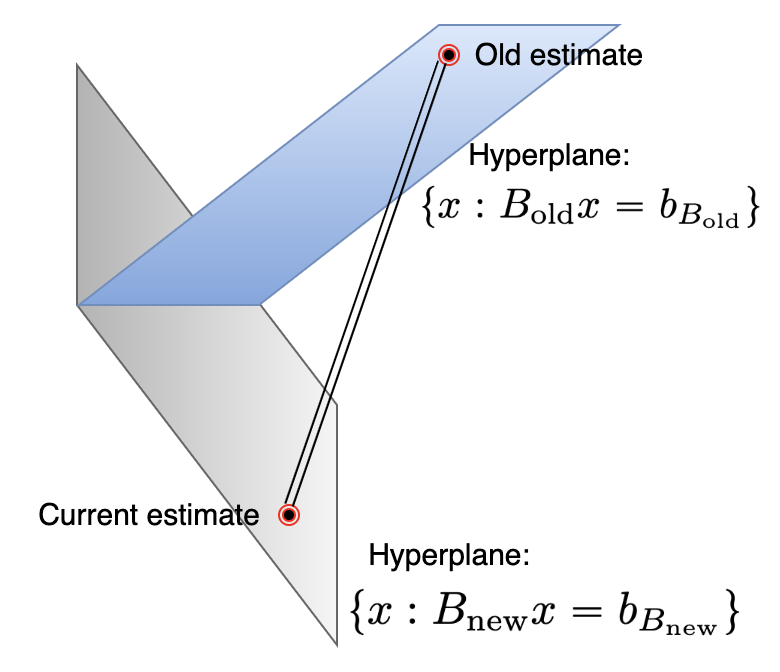}
    \caption{Representation of solution updates in block-Kaczmarz methods.}
    \label{fig:graph_block_kaczmarz}
\end{figure}

Note that this offers the (block) Kaczmarz method a significant advantage over residual-minimizing GMRES in highly ill-conditioned systems. GMRES directly minimizes the residue $\|r_k\|_2 = \|b - Ax_k\|_2$, which primarily attenuates error components corresponding to large singular values, leaving other directions largely unresolved with a poor convergence in the actual \textit{errors}. Also, when exploiting nearly orthogonal block subspaces in the former, successive block updates act in complementary directions, reducing oscillatory behavior in the errors and ensuring more uniform error contraction across directions. Consequently, the block Kaczmarz method can converge to substantially lower relative errors in the solution than GMRES or its variants, providing more robust iterative solutions for ill-conditioned linear systems.

Rest of this section describes the proposed ROR-BK method and its use in the preconditioned flexible GMRES method for solving \autoref{eq:linear_system}. As presented in \cite{10}, an efficient Kaczmarz-type routine as the inner-iteration can be used as the preconditioner for
FAB-GMRES. Performing continuous updates with mutually orthogonal blocks can lead to a faster approximation of the exact solution of \autoref{eq:linear_system} without requiring prior computation of $AA^T$ for the inner iterations in FAB-GMRES. Based on this motivation, Simple Orthogonal Block-Kaczmarz (SOBK) method was proposed\cite{0}. While SOBK samples pairs of mutually orthogonal blocks of contiguous rows, we propose to use the cumulative effective orthogonality of a block with \emph{all} other blocks in its random sampling, along with a dynamically aggregated block based on the current residual, for updating the iterative solution. 
 
We show through \autoref{thm:convergence_boost} that this addition enhances the convergence of the earlier proposed SOBK method. Also, as opposed to the use of $B^\dagger=B^T(BB^T)^{-1}$ directly in \autoref{eq:update_rule_block_kaczmarz} for the update in block-Kaczmarz methods, we use its regularized form $B^T(BB^T+\mu I)^{-1}$ to ensure stability. This choice of the positive constant $\mu$ corresponds to a stochastic proximal point algorithm with a large constant step size $\frac{1}{\mu}$ as discussed in \cite{ReRBK}.

First, a sequential aggregation is used to create blocks for \(A\) and \(b\) as $[A_1,\cdots,A_k]$ and $[b_1,\cdots,b_k]$, respectively, each with at least $\left\lfloor \frac{m}{k} \right\rfloor$ rows in it. The symmetric matrix \(C\) is then evaluated, where the \((i,j)^{th}\) entry of $C$ is defined as $\frac{|\langle\bar{A_i}, \bar{A_j}\rangle|}{\|\bar{A_i}\|\|\bar{A_j}\|}$, indicating the cosine of the effective angle between blocks \(A_i\) and \(A_j\) using corresponding centroids $\bar{A_i}$ and $\bar{A_j}$. The centroid vector is just a sum of all row vectors of the given block of $A$. Since the number of blocks $k\ll m$ the number of rows, the $\mathcal{O}(mn+k^2n)$ evaluation of C(i,j) representing the degree of orthogonality between all pairs of blocks $i,j$ is not cumbersome. To sample a block based on its effective orthogonality with all other blocks, the probability $\mathcal{P}_t$ of a block numbered $t$ is given by:
\begin{equation}
\mathcal{P}_t =\frac{\mathcal{\bar{P}}_t}{\sum_{t=1}^k\mathcal{\bar{P}}_t} \text{ where }\mathcal{\bar{P}}_t = e^{-\frac{2\sum_{j=1}^k C(t,j)}{n}}.
\end{equation}
Note that sampling such a distribution for any one of the $k$ indices of the blocks using rejection sampling is only $\mathcal{O}(k)$ in computing effort in every iteration, along with a one-time $\mathcal{O}(k^2)$ cost in generating the distribution. The number of such updates $l$ of the iterative solution $x_j$ based on orthogonality, for every iteration using a residual based dynamic aggregation of a block, can be determined by the user based on stability requirements. In this work we use $l=3$ as presented in \autoref{algo:roar-bk}.

\begin{algorithm}
\caption{ROR-BK (Regularized Orthogonality and Residual based Block Kaczmarz)}\label{algo:roar-bk}
\begin{algorithmic}[1]
\Require $A, b, x_0, \mu,k$. \Comment{ $k$: number of blocks, and $\mu$: regularization parameter.}
\Ensure $\tilde{x}$\Comment{Approximate solution}
\State Perform uniform contiguous aggregation of rows of $A$ and $b$ in sequential order, and obtain $k$ blocks $A_i$ and $b_i$, $i = 1,2,\ldots,k$. $\bar{A} = [\bar{A}_1, \bar{A}_2, \ldots, \bar{A}_k]$, where $\bar{A}_i$ represents the centroid of $A_i$.
\State Compute the absolute cosine value $C(i,j)$ of the effective angle between blocks $A_i$ and $A_j$ by the centroid coordinates, and the corresponding symmetric matrix $C$.
\State Following $C$, create a list $\mathcal{\bar{P}}$ of length $k$ with it's $t^{\text{th}}$ element defined as $e^{-\frac{2\sum_{j=1}^k C(t,j)}{n}}$.
\State Define probability distribution $\mathcal{P}=\frac{\mathcal{\bar{P}}}{\sum_{t=1}^k\mathcal{\bar{P}}[t]}$
\For{$j =0,1,2,\ldots$ until convergence}
    \For{$l = 1,2,3$}
        \State Sample an index $\tau$ w.r.t the probability distribution $\mathcal{P}$.
        \State $x_{j} \leftarrow x_j + A_{\tau}^T(A_{\tau}A_{\tau}^T+\mu I)^{-1} (b_{\tau} - A_{\tau} x_j)$
    \EndFor
    \State $r = b - A x_{j}$
\If{Convergence criterion met}
    \State terminate
\Else
    \State\label{step:roar-bk} pick set of indices $\mathcal{I}$ with top $\left\lfloor \frac{m}{k} \right\rfloor$ squared elements $r_i^2$ where $i=1,2,\dots,m$
\EndIf
    \State $A_{\mathcal{I}}=A(\mathcal{I},:)$ and $b_{\mathcal{I}}=b(\mathcal{I})$
    \State $x_{j+1} \leftarrow x_{j}+A_{\mathcal{I}}^T(A_{\mathcal{I}}A_{\mathcal{I}}^T+\mu I)^{-1}(b_{\mathcal{I}}-A_{\mathcal{I}}x_{j})$
\EndFor
\end{algorithmic}
\end{algorithm}

For dynamically constructing a block based on the residual at each iteration, we gather a fixed number of rows that contribute the most to the current residual. In step:\autoref{step:roar-bk} of \autoref{algo:roar-bk} for a real linear system, we use the sorted order of squared elements of the residual vector $r$ and construct the set of indices $\mathcal{I}$, representing the required $\lfloor \frac{m}{k}\rfloor$ rows, to minimize the 2-norm of the current residual $\|b-Ax_{j}\|_2^2$ in each update of the solution.

The proposed method applies to the different types of linear systems, namely square ($m=n$), overdetermined ($m > n$), and underdetermined ($m < n$) cases. We analyse the convergence properties of Algorithm \autoref{algo:roar-bk}, and also we show that it can further decrease the residual encountered in SOBK, thereby accelerating convergence and improving the decay of relative residual norm (RRN) given by $\frac{\|b-Ax_t\|_2}{\|b\|_2}$, where $x_t$ is the approximated solution at $t^{\text{th}}$ iteration in the proposed ROR-BK method.

\begin{lemma}\label{thm:woodberry_identity}
 For a matrix M $\in \mathbb{R}^{r\times n}$, let $I_r$ denote the $r\times r$ identity matrix, $I_n$ denote the $n \times n$ identity matrix, and $\mu$ be a positive constant, then:
 \begin{equation}\label{eq:woodberry_identity}
 M^T(MM^T+\mu I_r)^{-1}=(M^TM+\mu I_n)^{-1}M^T
 \end{equation}
\end{lemma}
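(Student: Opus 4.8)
The plan is to establish the identity by the elementary ``push-through'' manipulation rather than invoking the general Sherman--Morrison--Woodbury formula. First I would check that both matrices appearing as inverses are genuinely nonsingular: since $\tilde\lambda > 0$ and $MM^T \succeq 0$ (respectively $M^TM \succeq 0$), the shifted matrices $MM^T + \tilde\lambda I_r$ and $M^TM + \tilde\lambda I_n$ are symmetric positive definite, hence invertible, so both sides of \eqref{eq:woodberry_identity} are well defined.

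The key algebraic step is the commuting relation
\begin{equation*}
M^T(MM^T + \tilde\lambda I_r) = M^TMM^T + \tilde\lambda M^T = (M^TM + \tilde\lambda I_n)M^T,
\end{equation*}
which holds simply by distributing the products, as both sides equal $M^TMM^T + \tilde\lambda M^T$. I would then multiply this identity on the left by $(M^TM + \tilde\lambda I_n)^{-1}$ and on the right by $(MM^T + \tilde\lambda I_r)^{-1}$: the left multiplication removes the factor $(M^TM + \tilde\lambda I_n)$ from the right-hand side and the right multiplication removes $(MM^T + \tilde\lambda I_r)$ from the left-hand side, leaving precisely $M^T(MM^T + \tilde\lambda I_r)^{-1} = (M^TM + \tilde\lambda I_n)^{-1}M^T$, which is the claim.

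An alternative route is via the singular value decomposition $M = U\Sigma V^T$, where both sides reduce to $V\widehat\Sigma U^T$ with $\widehat\Sigma$ the diagonal matrix of entries $\sigma_i/(\sigma_i^2 + \tilde\lambda)$; this works but requires more careful bookkeeping of the non-square shapes and zero blocks of $\Sigma$, so I would prefer the first approach. The only point that needs any care --- the ``main obstacle,'' such as it is --- is the invertibility of the two shifted Gram matrices, which is exactly where the hypothesis $\tilde\lambda > 0$ is used; note the identity can fail at $\tilde\lambda = 0$ when $M$ is rank-deficient, since then the inverses do not exist. Everything else is a one-line distributivity check followed by cancellation.
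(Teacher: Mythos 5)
Your proof is correct, and it takes a genuinely different (and more elementary) route than the paper. The paper proves the lemma by invoking the Sherman--Morrison--Woodbury formula to expand both $(MM^T+\tilde\lambda I_r)^{-1}$ and $(M^TM+\tilde\lambda I_n)^{-1}$ into explicit series-like expressions, and then checks that $M^T$ times the first expansion agrees term-by-term with the second expansion times $M^T$. You instead use the one-line push-through relation $M^T(MM^T+\tilde\lambda I_r)=(M^TM+\tilde\lambda I_n)M^T$ and cancel by multiplying with the two inverses, whose existence you justify from $\tilde\lambda>0$ and positive semidefiniteness of the Gram matrices. Your argument buys simplicity and robustness: it requires no auxiliary identity, it makes the role of the hypothesis $\tilde\lambda>0$ explicit (the paper never addresses invertibility), and it avoids the delicate bookkeeping of scalar factors and nested inverses that the Woodbury expansion forces --- indeed the paper's intermediate displays \eqref{eq:wi1}--\eqref{eq:wi2} drop an inverse on the bracketed term and carry an inconsistent power of $\tilde\lambda$, slips that your approach has no opportunity to make. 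The paper's approach would only pay off if one actually needed the expanded forms of the inverses elsewhere, which it does not. Your remark that the identity can degenerate at $\tilde\lambda=0$ for rank-deficient $M$ is a worthwhile observation absent from the paper.
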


\begin{proof}
Using Woodberry Identity \cite{woodberryidentity} we can simplify $(MM^T+\mu I_r)^{-1}$ and $(M^TM+\mu I_n)^{-1}$ as:
    
\begin{align}
(\mu I_r+MM^T)^{-1} &=(\mu I_r+MI_nM^T)^{-1}\notag\\
&=(\mu I_r)^{-1}-(\mu I_r)^{-1}M\{M^T (\mu I_r)^{-1}M+(I_n)^{-1} \}M^T(\mu I_r)^{-1}\notag \\
&=\frac{1}{\mu}I_r-\frac{1}{\mu^3}M\{M^TM+\mu I_n\}M^T\label{eq:wi1}
\end{align}
and,
\begin{align}
(M^TM+\mu I_n)^{-1}&=(\mu I_n+M^TI_rM)^{-1}\notag\\
&=(\mu I_n)^{-1}-(\mu I_n)^{-1}M^T\{M(\mu I_n)^{-1}M^T+(I_r)^{-1}\}M(\mu I_n)^{-1}\notag\\
&=\frac{1}{\mu}I_n-\frac{1}{\mu^3}M^T\{MM^T+\mu I_r\}M\label{eq:wi2}
\end{align}
Thus, LHS of \autoref{eq:woodberry_identity} becomes:
\begin{align}
M^T(MM^T+\mu I_r)^{-1}&=M^T[\frac{1}{\mu}I_r-\frac{1}{\mu^3}M\{M^TM+\mu I_n\}M^T], \text{using \autoref{eq:wi1}}\notag\\
&=\frac{1}{\mu}M^T-\frac{1}{\mu^3}M^TM\{M^TM+\mu I_n\}M^T\label{eq:wi3}
\end{align}
and RHS of \autoref{eq:woodberry_identity} becomes:
\begin{align}
(M^TM+\mu I_n)^{-1}M^T&=[\frac{1}{\mu}I_n-\frac{1}{\mu^3}M^T\{MM^T+\mu I_r\}M]M^T\text{using \autoref{eq:wi2}}\notag\\
&=\frac{1}{\mu}M^T-\frac{1}{\mu^3}M^T\{MM^T+\mu I_r\}MM^T\notag\\
&=\frac{1}{\mu}M^T-\frac{1}{\mu^3}M^TM\{M^TM+\mu I_n\}M^T\label{eq:wi4}
\end{align}
As R.H.S of \autoref{eq:wi3}, and \autoref{eq:wi4} are identical, the lemma holds. Note that if $r<n$, then the evaluation of LHS in \autoref{eq:woodberry_identity} is cheaper than that of RHS, while the converse is true if $n<r$.
\end{proof}

\begin{lemma}\label{thm:x_in_range_A_transpose}
If $x_0 \in \mathcal{R}(A^T)$, then the approximate solution $\tilde{x}$ generated using Algorithm \autoref{algo:roar-bk} also satisfies $\tilde{x} \in \mathcal{R}(A^T)$. $\mathcal{R}(T)$ denotes the range space of matrix $T$.
\end{lemma}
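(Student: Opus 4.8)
The plan is to prove this by induction over the entire sequence of solution updates carried out by \cref{algo:roar-bk}, showing that each individual update keeps the running iterate inside the subspace $\mathcal{R}(A^T)$.

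First I would observe that every update performed in the algorithm has the same algebraic shape. Both the orthogonality-based step $x_j \leftarrow x_j + A_\tau^T(A_\tau A_\tau^T + \tilde\lambda I)^{-1}(b_\tau - A_\tau x_j)$ and the residue-based step $x_{j+1} \leftarrow x_j + A_{\mathcal{I}}^T(A_{\mathcal{I}} A_{\mathcal{I}}^T + \tilde\lambda I)^{-1}(b_{\mathcal{I}} - A_{\mathcal{I}} x_j)$ can be written as $x \leftarrow x + M^T v$, where $M$ is a row submatrix of $A$ (its rows are a subset of the rows of $A$) and $v := (MM^T + \tilde\lambda I)^{-1}(b_M - Mx) \in \mathbb{R}^{r}$, with $r$ the number of rows of $M$. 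Since $\tilde\lambda > 0$, the matrix $MM^T + \tilde\lambda I$ is positive definite, hence invertible, so $v$ is well defined; if one prefers, \cref{thm:woodberry_identity} lets us rewrite the increment as $(M^TM + \tilde\lambda I)^{-1}M^T(b_M - Mx)$, which is still visibly of the form $M^T(\text{something})$.

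Next I would note that $\mathcal{R}(M^T)$ — the span of the rows of $M$, regarded as column vectors — is contained in $\mathcal{R}(A^T)$, the row space of $A$, precisely because the rows of $M$ are among the rows of $A$. Consequently the increment $M^T v$ lies in $\mathcal{R}(A^T)$. Because $\mathcal{R}(A^T)$ is a linear subspace and is therefore closed under vector addition, if the iterate before a given update lies in $\mathcal{R}(A^T)$, then so does the iterate after that update.

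Finally, starting from the hypothesis $x_0 \in \mathcal{R}(A^T)$ and applying the previous observation to the finite list of updates performed within each outer iteration $j = 0, 1, 2, \ldots$ (the three inner orthogonality updates followed by the residue update), an immediate induction gives $x_j \in \mathcal{R}(A^T)$ for every $j$, and in particular the returned approximate solution $\tilde x \in \mathcal{R}(A^T)$. I do not anticipate any genuine obstacle in this argument; the only point that needs a moment of care is the bookkeeping that the blocks $A_\tau$ and $A_{\mathcal{I}}$ are literally assembled from rows of $A$, so that $\mathcal{R}(A_\tau^T) \subseteq \mathcal{R}(A^T)$ and $\mathcal{R}(A_{\mathcal{I}}^T) \subseteq \mathcal{R}(A^T)$, and that introducing the regularizer $\tilde\lambda I$ does not change the column space of the increment — both facts being immediate from the definitions in \cref{algo:roar-bk}.
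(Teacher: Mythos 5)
Your proposal is correct and follows essentially the same route as the paper's proof: both arguments observe that every update adds an increment of the form $M^T(\cdot)$ with $M$ a row submatrix of $A$, so the increment lies in $\mathcal{R}(A^T)$, and conclude by induction. Your version is slightly more explicit in checking both the orthogonality-based and residue-based updates separately, but there is no substantive difference.
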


\begin{proof}
    The update rule for $x_{t+1}$ from $x_{t}$ is as follows:
    \begin{equation}\label{eq:update_rule}
    x_{t+1}=x_{t}+A_{\tau}^T(A_{\tau}A_{\tau}^T+\mu  I)^{-1}(b_{\tau}-A_{\tau}x_{t})
    \end{equation}
    for a chosen block $A_{\tau}$. It is clear from \autoref{eq:update_rule} that if $x_t \in \mathcal{R}(A^T)$, then as $A_{\tau}^T(A_{\tau}A_{\tau}^T+\mu I)^{-1}(b_{\tau}-A_{\tau}x_{t}) \in \mathcal{R}(A^T)$, $x_{t+1}$ is also in $\mathcal{R}(A^T)$. Now using the principle of mathematical induction, the theorem can be proved for any $t$. Hence, the obtained approximate solution $\tilde{x} \in \mathcal{R}(A^T).$ 
\end{proof}

One can show that the above method of choosing blocks can also solve a weighted least square solution of the blocks with the required convergence properties, as presented in the appendix. Note that the proliferation of large-scale datasets in machine learning and scientific computing necessitates the development of algorithms that process only small data subsets per iteration. For linear least-squares problems, the randomized block-Kaczmarz (RBK) method exemplifies such an approach; however, existing convergence guarantees require sampling distributions that may involve computationally prohibitive preprocessing costs. This limitation can be overcome through a randomized block-Kacmarz method with uniform sampling, establishing (as shown in \autoref{thm:dobk_for_least_square_problems}) that the iterates converge in the mean to a weighted least-squares solution. However, the resulting weight matrix can exhibit arbitrarily large condition numbers, and the iterate variance may grow unbounded in the absence of a stable solver like ROR-BK. We now proceed to analyse this proposed method.

\begin{theorem}\label{thm:err_bound}
    Let $x_{t+1}$ and $x_t$ be the $(t+1)^{th}$ and $t^{th}$ updates of the solution in Algorithm \autoref{algo:roar-bk} respectively with respect to a chosen block $A_\tau$. Let $x_\star$ be $A^\dagger b$. Then, 
    \[
    \|x_{t+1} - x_\star\|_2 \leq 
    \frac{\mu}{\lambda_{\min}^{+}(A_\tau^T A_\tau) + \mu} \|x_t - x_\star\|_2
    \]
Here, $\lambda_{\min}^{+}(A_\tau^T A_\tau)$ is the smallest non-zero eigenvalue of $A_\tau^T A_\tau$. Equivalently, \begin{align}
\mathbb{E}[\|x_{t}-x_\star\|_2]\le \left( \frac{\mu}{\lambda_{\text{min}}^{\text{block}}+\mu} \right)^t\mathbb{E}[\|x_0-x_\star\|_2] \notag
\end{align}
where, $\lambda_{\text{min}}^{\text{block}}$ is the minimum non-zero eigenvalue of $A_\tau^TA_\tau$ for all $\tau$.
\end{theorem}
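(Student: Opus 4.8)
The plan is to work with the single-block update \eqref{eq:update_rule} for a fixed chosen block $A_\tau$ and show the contraction directly, then obtain the expectation bound by iterating over all updates in \cref{algo:roar-bk}. First I would subtract $x_\star$ from both sides of \eqref{eq:update_rule}, using $b_\tau = A_\tau x_\star$ (valid for the minimum-norm solution since the system is consistent), to get
\begin{equation}
x_{t+1} - x_\star = \bigl(I - A_\tau^T(A_\tau A_\tau^T + \tilde{\lambda} I)^{-1} A_\tau\bigr)(x_t - x_\star). \notag
\end{equation}
By \cref{thm:woodberry_identity} this error-propagation matrix equals $I - (A_\tau^T A_\tau + \tilde{\lambda} I)^{-1} A_\tau^T A_\tau = \tilde{\lambda}(A_\tau^T A_\tau + \tilde{\lambda} I)^{-1}$, which is symmetric positive definite and depends only on $A_\tau^T A_\tau$.

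The second step is to bound the operator norm of $\tilde{\lambda}(A_\tau^T A_\tau + \tilde{\lambda} I)^{-1}$ when applied to $x_t - x_\star$. Here the subtlety is that this matrix has eigenvalue $1$ on $\ker(A_\tau)$, so a naive spectral-norm bound gives only $1$, not a contraction. The resolution is \cref{thm:x_in_range_A_transpose}: since $x_0 \in \mathcal{R}(A^T)$ (or one simply takes $x_0 = 0$) and $x_\star = A^\dagger b \in \mathcal{R}(A^T)$, the error $x_t - x_\star$ lies in $\mathcal{R}(A^T) \subseteq \mathcal{R}(A_\tau^T)^{\perp\perp}$; more carefully, the component of $x_t - x_\star$ in $\ker(A_\tau)$ is annihilated in the direction that matters, or rather one argues that the update leaves the $\mathcal{R}(A^T)$ membership intact and restricts attention to the relevant invariant subspace where $A_\tau^T A_\tau$ has eigenvalues bounded below by $\lambda_{\min}^+(A_\tau^T A_\tau)$. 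On that subspace $\tilde{\lambda}(A_\tau^T A_\tau + \tilde{\lambda} I)^{-1}$ has spectral radius $\tilde{\lambda}/(\lambda_{\min}^+(A_\tau^T A_\tau) + \tilde{\lambda})$, giving the stated deterministic bound.

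The third step is the expectation estimate. Taking expectations over the sampled index $\tau$ (distributed as $\mathcal{P}$), each update contracts by at most $\tilde{\lambda}/(\lambda_{\min}^{\text{block}} + \tilde{\lambda})$ where $\lambda_{\min}^{\text{block}} = \min_\tau \lambda_{\min}^+(A_\tau^T A_\tau)$ is the worst-case (smallest) bound over all blocks; by the tower property and independence of the sampling across iterations, composing $t$ such steps — and noting the residue-based block step of \cref{algo:roar-bk} is itself a regularized block update of the same form, hence also non-expansive and in fact contractive by the same argument — yields $\mathbb{E}\|x_t - x_\star\|_2 \le (\tilde{\lambda}/(\lambda_{\min}^{\text{block}} + \tilde{\lambda}))^t \,\mathbb{E}\|x_0 - x_\star\|_2$.

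I expect the main obstacle to be the careful handling of $\ker(A_\tau)$ in the second step: one must rule out the trivial eigenvalue $1$ by exploiting that the whole trajectory stays in $\mathcal{R}(A^T)$, and verify that on this subspace the nonzero-eigenvalue bound $\lambda_{\min}^+$ genuinely applies to every component of the error — i.e., that $\mathcal{R}(A^T)$ has no nontrivial intersection with $\ker(A_\tau)$ that would spoil the contraction, or that any such intersection contributes a component which is simply carried along without affecting convergence to $x_\star$. (If it does intersect $\ker(A_\tau)$ nontrivially for a single block, the per-step bound degrades to $1$ for that block; the argument then relies on averaging over blocks whose kernels have trivial common intersection, which is where the consistency of the full system and the definition of $\lambda_{\min}^{\text{block}}$ over all blocks becomes essential.) The remaining steps are routine linear algebra once \cref{thm:woodberry_identity} and \cref{thm:x_in_range_A_transpose} are invoked.
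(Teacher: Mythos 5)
Your proposal follows essentially the same route as the paper's proof: subtract $x_\star$ from the update, apply \cref{thm:woodberry_identity} to rewrite the error-propagation matrix as $\tilde{\lambda}(A_\tau^T A_\tau + \tilde{\lambda} I)^{-1}$, bound its action spectrally, and iterate in expectation. The one point worth noting is that the ``obstacle'' you flag in your second step --- that this matrix has eigenvalue $1$ on $\ker(A_\tau)$, so the $\lambda_{\min}^{+}$ bound only applies if $x_t - x_\star$ has no component in $\ker(A_\tau)$, and $\mathcal{R}(A^T)$ need not be contained in $\mathcal{R}(A_\tau^T)$ for a single block --- is a genuine issue, but the paper's own proof passes from \cref{eq:errbound2} to \cref{ew:errbound3} without addressing it at all; your treatment is therefore no less complete than the published argument, and your explicit identification of the gap is the more careful reading.
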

\begin{proof}
\autoref{algo:roar-bk} uses blocks $A_\tau$ in updates as:
\begin{align}
x_{t+1}&=x_{t}+A_\tau^T(A_\tau A_\tau^T+\mu I)^{-1}(b_{\tau}-A_{\tau}x_t)\notag
\end{align}
Thus we have,
\begin{align}
x_{t+1} - x_\star &= x_t - x_\star+ A_\tau^T(A_\tau A_\tau^T+\mu I)^{-1}(b_{\tau}-A_{\tau}x_t) \notag\\
&= x_t - x_\star - A_\tau^T(A_\tau A_\tau^T+\mu I)^{-1}A_{\tau}(x_t-x_\star) \label{eq:errbound0}\\
&= (I - M)(x_t - x_\star)\label{eq:errbound1}
\end{align}
Now using \autoref{thm:woodberry_identity}, $M$ in the above can be rewritten as:
\begin{align}
    M&=A_\tau^T(A_\tau A_\tau^T+\mu I)^{-1}A_{\tau}\notag\\
    &=(A_\tau^TA_\tau+\mu I)^{-1}A_{\tau}^TA_\tau\notag\\
    &=I-\mu(A_\tau^TA_\tau+\mu I)^{-1}\label{eq:errbound2}
\end{align}
Thus, \autoref{eq:errbound1} reduces to,
\begin{align}
\|x_{t+1} - x_\star\|_2 &= \|\mu(A_\tau^TA_\tau+\mu I)^{-1}(x_t - x_\star)\|_2 [\text{Using \autoref{eq:errbound2}}]\notag\\
&\le \frac{\mu}{\lambda_{\min}^{+}(A_\tau^TA_\tau)+\mu}\|x_t-x_\star\|_2\label{ew:errbound3}
\end{align}
So, in expectation:
\begin{align}
\mathbb{E}[\|x_{t}-x_\star\|_2] &\le \mathbb{E}[\frac{\mu}{\lambda_{\min}^{+}(A_\tau^TA_\tau)+\mu}\|x_{t-1}-x_\star\|_2], \text{using \autoref{ew:errbound3}}\notag\\
&\le \left( \frac{\mu}{\lambda_{\text{min}}^{\text{block}}+\mu} \right)\mathbb{E}[\|x_{t-1}-x_\star\|_2]\notag\\
&\le ......\notag\\
&\le \left( \frac{\mu}{\lambda_{\text{min}}^{\text{block}}+\mu} \right)^{t}\mathbb{E}[\|x_{0}-x_\star\|_2]
\end{align}
where, $\lambda_{\text{min}}^{\text{block}}$ is the minimum non-zero eigenvalue of $A_\tau^TA_\tau$ for all $\tau$.
\end{proof}
The above theorem establishes that a relatively small non-zero regularization parameter $\mu$ ensures stable convergence even when any of the blocks in the given system of equations is numerically rank deficient. Though a relatively smaller $\mu$ provides the expected reduction of error as outlined in \autoref{thm:err_bound}, a very small $\mu$ can be ineffective. We use $\mu=1e-6 \times$ \text(number of rows in a block) in our experiments that are described in the later sections. We now show that using a residual based dynamic block of equations for updating the iterative solution can also help to obtain faster convergence in general.

\begin{theorem}\label{thm:convergence_boost}
    In Algorithm \autoref{algo:roar-bk}, when $\mu$ is sufficiently small, updating the solution $x_t$ with respect to a block containing $\left\lfloor\frac{m}{k}\right\rfloor$ rows that contribute the most to the residual, reduces the upper and lower bounds of the error thus improving convergence in general.
\end{theorem}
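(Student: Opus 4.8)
The plan is to derive, for one residue-based update in \cref{algo:roar-bk} (step~\ref{step:roar-bk}), an exact expression for the squared error and then to sandwich it between an upper and a lower bound, each a strictly decreasing function of the residual mass $\sum_{i\in\mathcal{I}}r_i^2$ carried by the selected rows. Since step~\ref{step:roar-bk} picks $\mathcal{I}$ precisely to maximize that mass over all row subsets of size $\lfloor m/k\rfloor$, this choice pushes both bounds down as far as possible, which is the assertion.

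First I would set $e_t := x_t - x_\star$ and $N := A_{\mathcal{I}}^{T}A_{\mathcal{I}}$, and reuse the identity $x_{t+1}-x_\star = \tilde\lambda(N+\tilde\lambda I)^{-1}e_t$, which the proof of \cref{thm:err_bound} establishes (there written for a generic block $A_\tau$). Squaring this and writing $\tilde\lambda^{2}(N+\tilde\lambda I)^{-2} = I - N(N+2\tilde\lambda I)(N+\tilde\lambda I)^{-2}$ gives
\[
\|x_{t+1}-x_\star\|_2^2 \;=\; \|e_t\|_2^2 \;-\; e_t^{T}\,N(N+2\tilde\lambda I)(N+\tilde\lambda I)^{-2}\,e_t .
\]
Next I would use consistency of \eqref{eq:linear_system} (so $b_{\mathcal{I}}=A_{\mathcal{I}}x_\star$, hence $r_{\mathcal{I}}=b_{\mathcal{I}}-A_{\mathcal{I}}x_t=-A_{\mathcal{I}}e_t$ and $\sum_{i\in\mathcal{I}}r_i^2=\|r_{\mathcal{I}}\|_2^2=e_t^{T}Ne_t$). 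Since the leading factor $N$ annihilates the kernel of $N$, only its nonzero eigenvalues $\nu$ enter the subtracted quadratic form, and $\tfrac{1}{\lambda_{\max}(N)+\tilde\lambda}\le\tfrac{\nu+2\tilde\lambda}{(\nu+\tilde\lambda)^2}\le\tfrac{2}{\lambda_{\min}^{+}(N)+\tilde\lambda}$ for each such $\nu$; comparing eigenvalue by eigenvalue with $e_t^{T}Ne_t$ yields
\[
\|e_t\|_2^2 - \frac{2\sum_{i\in\mathcal{I}}r_i^2}{\lambda_{\min}^{+}(A_{\mathcal{I}}^{T}A_{\mathcal{I}})+\tilde\lambda}\;\le\;\|x_{t+1}-x_\star\|_2^2\;\le\;\|e_t\|_2^2 - \frac{\sum_{i\in\mathcal{I}}r_i^2}{\lambda_{\max}(A_{\mathcal{I}}^{T}A_{\mathcal{I}})+\tilde\lambda},
\]
and the same two inequalities hold for any competing block $A_{\tau}$ of the same size with its own collected residual mass in place of $\sum_{i\in\mathcal{I}}r_i^2$. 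For $\tilde\lambda$ sufficiently small the additive $\tilde\lambda$ in each denominator is negligible, so, treating the block spectra as comparable across the uniform contiguous candidates, both bounds are strictly decreasing in the collected residual mass. I would then conclude by recalling that step~\ref{step:roar-bk} selects the $\lfloor m/k\rfloor$ indices with the largest $r_i^2$, i.e.\ the set maximizing $\sum_{i\in\mathcal{I}}r_i^2$; hence it simultaneously lowers the upper and the lower bound on $\|x_{t+1}-x_\star\|_2$, equivalently it maximizes the guaranteed one-step decrease $\|e_t\|_2^2-\|x_{t+1}-x_\star\|_2^2\ge\left(\sum_{i\in\mathcal{I}}r_i^2\right)/\left(\lambda_{\max}(A_{\mathcal{I}}^{T}A_{\mathcal{I}})+\tilde\lambda\right)$, and thereby tightens the per-iteration contraction underlying \cref{thm:err_bound}. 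Nothing in the argument uses $m=n$, so it applies to the square, overdetermined, and underdetermined cases uniformly.

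The step I expect to be the genuine obstacle is the block-dependence of $\lambda_{\min}^{+}(A_{\mathcal{I}}^{T}A_{\mathcal{I}})$ and $\lambda_{\max}(A_{\mathcal{I}}^{T}A_{\mathcal{I}})$ in these bounds: unlike $\sum_{i\in\mathcal{I}}r_i^2$, these spectral quantities are not monotone under the row selection, so the clean statement that the residue block dominates every other block of the same size is rigorous only after treating the candidate block spectra as mutually comparable (which is reasonable for the uniform contiguous partition actually used, and is precisely what the phrase ``in general'' in the theorem absorbs) or after restricting the comparison to blocks of comparable conditioning. A minor secondary point is making precise the passage from ``both bounds shrink'' to ``convergence improves'': I would read it as the best- and worst-case one-step errors both becoming smaller, together with the consequent tightening of the $t$-step product estimate of \cref{thm:err_bound}.
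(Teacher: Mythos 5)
Your proposal is correct and follows essentially the same route as the paper: an exact one-step error identity for the block update, a sandwich of the subtracted quadratic form between $\lambda_{\max}(A_{\mathcal{I}}^{T}A_{\mathcal{I}})$ and $\lambda_{\min}^{+}(A_{\mathcal{I}}^{T}A_{\mathcal{I}})$ so that both bounds decrease in the collected residual mass $\sum_{i\in\mathcal{I}}r_i^2$, and the conclusion via maximality of that mass under the top-$\lfloor m/k\rfloor$ selection, with the same caveat about block-dependent spectra that the paper's ``in general'' absorbs. The only difference is cosmetic: you keep $\tilde{\lambda}$ exact throughout (at the cost of a factor $2$ in the lower bound), whereas the paper first passes to the $\tilde{\lambda}\to 0$ pseudoinverse limit $U=A_\tau^\dagger A_\tau$ and uses that $I-U$ is an orthogonal projection.
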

\begin{proof}
When $\mu$ is sufficiently small, the update rule in \autoref{algo:roar-bk} reduces to :
\begin{align}
    x_{t+1}&\approx x_t+A_{\tau}^\dagger(b_\tau-A_\tau x_t)\notag
\end{align}
Thus, \[
x_{t+1}-x_\star\approx (I-U)(x_{t}-x_\star)
\]
where, $U=A_\tau^\dagger A_\tau$. We know that $U^T=U$ and $U^2=U$. Thus, U is an orthogonal projection matrix, and so is $(I-U)$. Thus,
\begin{align}
    \|x_{t+1}-x_\star\|_2^2&\approx\|(I-U)(x_t-x_\star)\|_2^2\notag\\
    &= (x_t-x_\star)^T(I-U)^T(I-U)(x_t-x_\star)\notag\\
    &= (x_t-x_\star)^T(I-U)(x_t-x_\star)\notag\\
    &= \|x_t-x_\star\|_2^2-(x_t-x_\star)^TU(x_t-x_\star)\notag\\
    &= \|x_t-x_\star\|_2^{2}-(x_t-x_\star)^TU^TU(x_t-x_\star)\notag\\
    &= \|x_t-x_\star\|_2^2-\|U(x_t-x_\star)\|_2^2\notag\\
    &= \|x_t-x_\star\|_2^2-\|A_\tau^\dagger(A_\tau x_t-b)\|_2^2\label{eq:errbound4}
\end{align}

From \autoref{eq:errbound4},
    \begin{align}
      \|x_t-x_\star\|_2^2-\frac{\|A_\tau x_t-b\|_2^{2}}{\lambda_{\min}^{+}(A_\tau^{T}A_\tau)}\lesssim \|x_{t+1}-x_\star\|_2^2 & \lesssim \|x_t-x_\star\|_2^2-\frac{\|A_\tau x_t-b\|_2^{2}}{\lambda_{\max}^{+}(A_\tau^{T}A_\tau)}\label{eq:errbound5}
    \end{align}
where, $\lambda_{\min}^{+}(A_\tau^T A_\tau), \lambda_{\max}^{+}(A_\tau^TA_\tau)$ are the smallest and largest non-zero eigenvalues of $A_\tau^T A_\tau$ respectively. Now it is evident from \autoref{eq:errbound5} that given a $x_t$, if we choose the set of the indices $S$ of rows such that $\|(Ax_t)_S-b_S\|_2^2\ge\|(Ax_t)_{S'}-b_{S'}\|_2^2$ for any other set of indices $S'$, with $|S|=|S'|=\left\lfloor\frac{m}{k}\right\rfloor$, we are likely to minimize the above upper and lower bounds of the error $\|x_{t+1}-x_\star\|_2^2$ in \autoref{eq:errbound5} in general, as $\lambda_{\min}^{+}, \lambda_{\min}^{-}$ are bounded for a given system, and thus ensure faster convergence over many iterations.
\end{proof}

We utilize the proposed ROR-BK approach as an inner iteration to construct a flexible AB-GMRES algorithm, where the ROR-BK method serves as the preconditioner. This strategy is particularly advantageous for solving large-scale linear systems that are ill-conditioned. Refer to \autoref{algo:fabgmres_dobk} presented in the \autoref{sec:appendix}  for more details.

\begin{Remark}
If $z_0$ (the initial solution with which ROR-BK as an inner-iteration starts) and $x_0 \in \mathcal{R}(A^{T})$, FAB-GMRES with an inner iteration of ROR-BK (\autoref{algo:fabgmres_dobk}) gives a solution $x_k \in \mathcal{R}(A^T)$ minimizing $\norm{b-Ax_k}_2$.
\end{Remark}

\begin{proof}
    With $z_0 \in \mathcal{R}(A^T)$ as an initial solution for the inner iteration step, ROR-BK in \autoref{algo:fabgmres_dobk} guarantees that the approximate solution for $Az=v_k$ $\in \mathcal{R}(A^T)$ by \autoref{thm:x_in_range_A_transpose}. Thus, $z_{k} \in \mathcal{R}(A^T)$. Now from the last step of \autoref{algo:fabgmres_dobk}, it is clear that when $z_{k} \in \mathcal{R}(A^T)$, $u_{k}$ $\in \mathcal{R}(A^T)$, and thus $x_{k}$ $\in \mathcal{R}(A^T)$ provided $x_0 \in \mathcal{R}(A^T)$. 

    Therefore, when $x_k$ is an approximate solution of \autoref{eq:linear_system} obtained through \autoref{algo:fabgmres_dobk}, it is the solution minimizing the norm of residual, since $x_k \in \mathcal{R}(A^T) \perp \mathcal{N}(A)$.
\end{proof}

\section{Numerical experiments }\label{sec:numerical_experiments}

In this section, we present numerical experiments to illustrate the gains of the proposed methods over randomized linear solvers and the Krylov subspace solvers. We describe some parameters and facets of algorithmic implementation followed in our experiments, in the corresponding sections below. In \autoref{sec:results_randomized_solvers}, we compare the proposed Regularized Orthogonality and Residual based Block-Kaczmarz method (ROR-BK) with
\begin{itemize}
\item SOBK: Simple Orthogonal Block Kaczmarz \cite{0}
\item TA-ReBlocK-U: Tail Averaged Regularized Block Kaczmarz (TA-ReBlocK) with uniform sampling \cite{ReRBK}.
\end{itemize}

to demonstrate that utilizing the degree of orthogonality among all blocks, regularization in the block solutions, and a residual based block iterate, can significantly enhance the efficiency. Later in \autoref{sec:results_Krylov_solvers}, we also present experimental results highlighting the gains of the proposed method compared to well known Krylov solvers LSQR, AB/BA-GMRES, and their preconditioned versions. We also present results of practical image reconstruction to highlight its real-world advantages. In our experiments, we use a zero vector as the initial solution \(x_0\). However, we propose an \(x_0 \in \mathcal{R}(A^T)\) that can be efficiently evaluated to satisfy \(\frac{\|b-Ax_0\|_2}{\|b\|_2} \le 1\) as in Algorithm \autoref{algo:initial_solution} presented in the appendix. All experiments were performed on a MacBook Pro equipped with an Apple M4 Pro SoC featuring a 14-core CPU and 20-core GPU, 48 GB unified memory, 512 GB SSD, running macOS Tahoe 26.1 and MATLAB R2025b. The code corresponding to the reported experiments can be found at \cite{Codes}.

\subsection{Comparison with randomized solvers} \label{sec:results_randomized_solvers}

\begin{table}
\centering
\begin{tabular}{llccccc}
\hline
\textbf{Matrix} & & \textbf{Franz10} & \textbf{relat7} & \textbf{EX6} & \textbf{lpl3} & \textbf{nemswrld} \\
\hline
$m \times n$ & & $19588 \times 4164$ & $21924 \times 1045$ & $6545 \times 6545$ & $10828 \times 33686$ & $7138 \times 28550$ \\
Density & & $0.12\%$ & $0.36\%$ & $0.69\%$ & $0.03\%$ & $0.09\%$ \\
Cond$(A)$ & & $1.27\text{e}+16$ & $\infty$ & $5.32\text{e}+58$ & $5.34\text{e}+26$ & $\infty$ \\
\hline
\multirow{2}{*}{RBK$(k)$} & IT & $788$ & $1039$ & $48243$ & $10715$ & $47461$ \\
                          & CPU & $19.2037$ & $7.6635$ & $85.4743$ & $154.6893$ & $343.8807$ \\
\hline
\multirow{2}{*}{GBK} & IT & $156$ & $975$ & $6239$ & $\ddagger$ & $\ddagger$ \\
                     & CPU & $15.8140$ & $150.4485$ & $782.2268$ & $\ddagger$ & $\ddagger$ \\
\hline
\multirow{2}{*}{GRBK} & IT & $608$ & $1413$ & $47978$ & $\ddagger$ & $\ddagger$ \\
                      & CPU & $8.5032$ & $4.5733$ & $1609.3189$ & $\ddagger$ & $\ddagger$ \\
\hline
\multirow{2}{*}{GK} & IT & $19201$ & $46908$ & $869077$ & $2284891$ & $\ddagger$ \\
                    & CPU & $4.5091$ & $7.9804$ & $458.9739$ & $1758.0377$ & $\ddagger$ \\
\hline
\multirow{2}{*}{SOBK} & IT & $453$ & $700$ & $27742$ & $3768$ & $65998$ \\
                      & CPU & $\mathbf{2.5101}$ & $\mathbf{1.3119}$ & $\mathbf{12.5711}$ & $\mathbf{13.7768}$ & $\mathbf{106.1289}$ \\
\hline
\end{tabular}
\caption{Numerical comparisons of SOBK against a few state-of-the-art block-Kaczmarz methods \cite{0}. The stopping criterion is based on the relative residual norm (RRN), defined as RRN=$\frac{\|b-A\tilde{x}\|_2}{\|b\|_2}< 1e-06$ , where $\tilde{x}$ denotes the computed solution. Relative Error (RE)=$\frac{\|\tilde{x}-x_\star\|_2}{\|x_\star\|_2}$, the true solution $x_\star$ being randomly generated with entries from $\mathcal{N}(0,1)$ distribution . `$\ddagger$': The method did not meet the stopping criterion within 2000 seconds. IT - Number of iterations performed; CPU - Elapsed computing time in seconds. RBK($k$): Randomized block Kaczmarz method with $k$-means clustering \cite{14}. GBK: A greedy block Kaczmarz algorithm \cite{15}. GRBK: Greedy randomized block Kaczmarz method \cite{14}. GK: Greedy Kaczmarz method \cite{10}.}
\label{fig:SOBK_experiment_table}
\end{table}


Table 1 presented in \cite{0} (and reproduced here) highlights that SOBK is a significant improvement over the prior randomized Kaczmarz methods. In our further results comparing the proposed method to SOBK and TA-ReBlock-U, we fixed the number of blocks in the linear systems as 100 giving the number of rows in a block $k=\lfloor \frac{m}{100}\rfloor$. For TA-ReBlock-U, the regularization parameter $\mu= 1e-3 \times$(number of rows in a block), $T_b=300$ as suggested in \cite{ReRBK,0}, where we average the last $T_b$ number of updates of $x$'s for the final solution of TA-ReBlocK-U. If TA-ReBlock-U converges within 300 iterations, we consider only the recent approximation (i.e. $T_b=1$). For the proposed ROR-BK method we use $\mu=1e-06 \times$(number of rows in a block).

In ROR-BK, for a chosen block $A_\tau \in \mathbb{R}^{k\times n}$ if $k<n$ we compute and use  $(A_\tau A_\tau^{T}+\mu I)^{-1}$ for an update $x_{t+1}=x_{t}+A_\tau^{T}(A_\tau A_\tau^{T}+\mu I)^{-1}(b_\tau-A_\tau x_{t})$ and store it. Note that using Lemma \autoref{thm:woodberry_identity}, $x_{t}+A_\tau^{T}(A_\tau A_\tau^{T}+\mu I)^{-1}(b_\tau-A_\tau x_{t})=x_t+ (A_\tau^{T}A_\tau+\mu I)^{-1}A_\tau^{T}(b_\tau-A_\tau x_t)$. So while $k\ge n$, we compute and store $(A_\tau^{T}A_\tau+\mu I)^{-1}$ for the update $x_{t+1}= x_t+ (A_\tau^{T}A_\tau+\mu I)^{-1}A_\tau^{T}(b_\tau-A_\tau x_t)$.

When $k < n$, the computation cost for updating $x_{t+1}$ from $x_t$ is $\mathcal{O}(k^3 + nk^2)$, where the number of blocks is typically $\mathcal{O}(\sqrt{m})$. If the inversion is pre-computed and stored, each ROR-BK update costs $\mathcal{O}(nk)$. Thus the order of arithmetic operations in each iteration is given by the evaluation of residual at $\mathcal{O}(mn)$, which may be further improved by any implicit methods of evaluation if possible. To ensure a fair comparison, we also store $A_\tau^\dagger$ to avoid any such repeated evaluation in SOBK as well. A similar approach in computation was followed for $n \le k$.

As proposed in \cite{ReRBK}, a Cholesky-based linear solver is used to compute $(A_\tau A_\tau^T +\mu I)^{-1}(b_\tau-A_\tau x_t)$ in TA-ReBlock-U, which is inefficient compared to our implementation of ROR-BK especially when the number of iterations is large, as we avoid the more cumbersome re-evaluation of the inverse.

\autoref{tab:dobk_vs_sobk_vs_rerbk_tab1}, \autoref{tab:dobk_vs_sobk_vs_rerbk_tab2}, \autoref{tab:dobk_vs_sobk_vs_rerbk_tab3}, and \autoref{tab:dobk_vs_sobk_vs_rerbk_tab4} show the experimental results for different types of matrices in the University of Florida sparse matrix collection \cite{16}, and for randomly generated dense matrices of different sizes. For \autoref{tab:dobk_vs_sobk_vs_rerbk_tab3} with well-conditioned matrices, we generated $b=Ax, x_\star = A^\dagger b$, where $x \in \mathbb{R}^{n \times 1}$ was generated using MATLAB `randn' function. For the other tables, we compute \( b = Ax_\star \) by randomly generating \( x_\star \) using \( \mathcal{N}(0,1) \) distribution. For the cases where $A$ is from the University of Florida sparse matrix collection \cite{16}, the experimental results are reported after averaging over 50 different cases of $b$. For the randomized experiments, we generate 5 different matrices $A$, and for each case generate 50 different $x_\star$ with entries from \(\mathcal{N}(0,1)\). The average results of the above are reported for those instances. The stopping criterion for trial problems is based on the relative residual norm RRN$=\frac{\|b-A\tilde{x}\|_2}{\|b\|_2}< 1e-06$, where $\tilde{x}$ denotes the computed solution, unless stated otherwise. The observed relative error (RE)=$\frac{\|x-x_\star\|_2}{\|x_\star\|_2}$ indicates that indeed the solutions are converging even as we minimize the norm of the residual in these highly ill-condition problems.

We use speed-up of ROR-BK over a given method and given tolerance as $\frac{\text{time (or iterations) of method}}{\text{time (or iterations) of ROR-BK}}$ and we write it as `method:Speed-up'. `$\ddagger$' denotes that the method did not meet the convergence criterion within 2000 seconds. We use \enquote{IT} to denote the required number of iterations to meet the convergence criterion, and the elapsed computing time in seconds is referred to as \enquote{CPU}.

\begin{table}
    \centering
    \begin{tabular}{|c|c|c|c|c|}
        \hline
        Matrix & &Franz10 &  n3c6-b7 & lp-pds-02\\
        \hline
        Size & &19588 × 4164 & 6435 × 6435 &2953 × 7716\\
        \hline
        Density & & 0.12\% & 0.12\% &  0.07\%\\
        \hline
        cond(A)& &1.27e+16 &4.99e+202&  6.25e+15 \\
        \hline
        prob-cond& &1.6e+15  &1.78e+19 & 6.18e+15 \\
        \hline
\multirow{3}{*}{SOBK} & IT &540.90  &796.65 &16707.55 \\
                       & CPU &2.69 &2.12 & 13.80\\
                       &RE&3.82e-01 & 6.83e-01& 7.86e-01\\
        \hline
\multirow{3}{*}{TA-ReBlocK-U} & IT &899.90 &1618.90 &37323.55\\
                       & CPU & 5.64 &6.23 &186.30 \\
                        &RE&3.82e-01  &6.83e-01 &7.86e-01\\
        \hline
 \multirow{3}{*}{ROR-BK} & IT &108.40 & 143.30&2429.7  \\
                       & CPU & 1.49 & 0.71&3.94 \\
                       &RE&3.82e-01  & 6.83e-01& 7.87e-01\\
        \hline       
\multirow{2}{*}{SOBK:Speed-up} & IT &\textbf{4.98}  &\textbf{5.55} &\textbf{5.92} \\
               & CPU &\textbf{1.79}&\textbf{2.98}& \textbf{3.50} \\
                      
        \hline
\multirow{2}{*}{TA-ReBlocK-U:Speed-up} & IT & \textbf{8.30} &\textbf{11.29} &\textbf{15.36} \\
                       & CPU &\textbf{3.77}&\textbf{8.77}&\textbf{47.28} \\
                      
        \hline
    \end{tabular}
    \caption{Ill-conditioned linear systems. The stopping criterion is based on the relative residual norm (RRN), defined as RRN=$\frac{\|b-A\tilde{x}\|_2}{\|b\|_2}< 1e-06$ , where $\tilde{x}$ denotes the computed solution. Relative Error (RE)=$\frac{\|\tilde{x}-x_\star\|_2}{\|x_\star\|_2}$, the true solution $x_\star$ being randomly generated with entries from $\mathcal{N}(0,1)$ distribution . `$\ddagger$': The method did not meet the stopping criterion within 2000 seconds. IT - Average number of iterations performed; CPU - Average computing time in seconds. Note that scaling factors between CPU time and iterations (IT) are a function of the block sizes in the particular example matrix.}
    \label{tab:dobk_vs_sobk_vs_rerbk_tab1}
\end{table}



\begin{table}
    \centering
    \begin{tabular}{|c|c|c|c|c|c|}
        \hline
        Matrix & &randn & randn & randn& bcsstm25\\
        \hline
        Size & &60000 × 10000 & 60000 × 20000 &20000 × 60000& 15439 × 15439\\
        \hline
        Density &&100\% &100\% & 100\%& 0.006\%\\
        \hline
        cond(A)&&2.73e+00 &3.72e+00 &3.72e+00 &6.05e+09  \\
        \hline
        prob-cond&&1.68e+00 &2.36e+00 & 1.35e+00 & 5.94e+08  \\
        \hline

\multirow{3}{*}{SOBK} & IT &218.01 &859.80 & 2902.55&249.50 \\
                       & CPU & 16.87 & 176.47 &450.76 & 4.32 \\
                       &RE&1.14e-06&1.60e-06 & 8.16e-01&4.86e-07\\
        \hline
\multirow{3}{*}{TA-ReBlocK-U} & IT &771.10 &2053.15  &6009.70&  1196.65 \\
                       & CPU &40.03  &570.89 &1705.06 & 16.07 \\
                       &RE&1.28e-06 & 1.97e-06&8.16e-01 &1.04e-02\\
        \hline
\multirow{3}{*}{ROR-BK} & IT &63.05 & 215.15 &754.70& 33.4  \\
                       & CPU & 13.61 &48.43 & 87.17&0.91  \\
                       &RE& 1.02e-06& 1.66e-06&8.16e-01 &6.04e-02\\
        \hline

\multirow{2}{*}{SOBK:Speed-up} & IT &\textbf{3.46} & \textbf{3.81}&\textbf{3.84}&\textbf{7.47}  \\
                       & CPU &\textbf{1.23}&\textbf{3.62}& \textbf{5.17}& \textbf{4.72} \\
        \hline
\multirow{2}{*}{TA-ReBlocK-U:Speed-up} & IT &\textbf{12.23} & \textbf{9.11}&\textbf{7.96}&\textbf{35.82}  \\
                       & CPU &\textbf{2.94}&\textbf{11.78}& \textbf{19.56}& \textbf{17.57} \\
        \hline
         
    \end{tabular}
    \caption{Well-conditioned and Moderately ill-conditioned number linear systems. The stopping criterion is based on the relative residual norm (RRN), defined as RRN=$\frac{\|b-A\tilde{x}\|_2}{\|b\|_2}< 1e-06$ , where $\tilde{x}$ denotes the computed solution. Relative Error (RE)=$\frac{\|\tilde{x}-x_\star\|_2}{\|x_\star\|_2}$, the true solution $x_\star$ being randomly generated with entries from $\mathcal{N}(0,1)$ distribution . `$\ddagger$': The method did not meet the stopping criterion within 2000 seconds. IT - Average number of iterations performed; CPU - Average computing time in seconds. Note that scaling factors between CPU time and iterations (IT) are a function of the block sizes in the particular example matrix.}
    \label{tab:dobk_vs_sobk_vs_rerbk_tab2}
\end{table}

\begin{table}
    \centering
    \begin{tabular}{|c|c|c|c|c|c|}
        \hline
        Matrix & &lp-80bau3b & cage10 &  abtaha1\\
        \hline
        Size & &2262 × 12061& 11397×11397  & 14596 × 209\\
        \hline
        Density & & 0.08\% &0.11\% & 1.68\%\\
        \hline
        cond(A)& &5.67e+02& 1.10e+01&  1.22e+01 \\
        \hline
        prob-cond&&1.37e+01&7.34e+00 & 9.84e+00 \\
        \hline

\multirow{3}{*}{SOBK} & IT &3479.05 &1260.70 & 190.01  \\
                       & CPU &5.40 &7.24 & 0.18   \\
                       &RE&9.02e-06 &1.31e-06 &2.10e-06   \\
        \hline
\multirow{3}{*}{TA-ReBlocK-U} & IT &14592.65 &12201.85 & 524.11  \\
                       & CPU &52.54 &160.01 &  0.17  \\
                       &RE&9.12e-06 &6.40e-06 & 6.22e-06  \\
        \hline
\multirow{3}{*}{ROR-BK} & IT &624.15 &270.40 & 18.72  \\
                       & CPU &1.55 & 3.26& 0.09   \\
                       &RE&9.02e-06 &1.38e-06 & 1.56e-06  \\
        \hline

\multirow{2}{*}{SOBK:Speed-up} & IT & \textbf{5.57}&\textbf{4.66} &\textbf{10.15} \\
                       & CPU &\textbf{3.48}&\textbf{2.22}& \textbf{2.07}  \\
        \hline
\multirow{2}{*}{TA-ReBlocK-U:Speed-up} & IT &\textbf{23.95} &\textbf{45.12} & \textbf{27.99}  \\
                       & CPU &\textbf{33.89} & \textbf{49.08}& \textbf{1.89}  \\
        \hline 
    \end{tabular}
    \caption{Well-conditioned linear systems. The stopping criterion is based on the relative residual norm (RRN), defined as RRN=$\frac{\|b-A\tilde{x}\|_2}{\|b\|_2}< 1e-06$ , where $\tilde{x}$ denotes the computed solution. Relative Error (RE)=$\frac{\|\tilde{x}-x_\star\|_2}{\|x_\star\|_2}$, the true solution $x_\star=A^\dagger b$. `$\ddagger$': The method did not meet the stopping criterion within 2000 seconds. IT - Average number of iterations performed; CPU - Average computing time in seconds. Note that scaling factors between CPU time and iterations (IT) are a function of the block sizes in the particular example matrix.}
\label{tab:dobk_vs_sobk_vs_rerbk_tab3}
\end{table}

\begin{table}
    \centering
    \begin{tabular}{|c|c|c|c|c|}
        \hline
        Matrix & &1+rand & 1+rand & 1+rand \\
        \hline
        Size & &100000 × 20000& 100000 × 10000 &100000× 15000\\
        \hline
        Density & & 100\% &100\% & 100\% \\
        \hline
        cond(A)& &1.32e+03& 1.46e+02 & 3.31e+02 \\
        \hline
        prob-cond&&3.79e+00&3.25e+00&4.32e+00 \\
        \hline

\multirow{3}{*}{SOBK} & IT &705.1 &268.50 & 458.30   \\
                       & CPU &419.59 &73.80 & 139.87   \\
                       &RE& 2.61e-06&5.39e-06 & 4.39e-06 \\
        \hline
\multirow{3}{*}{TA-ReBlocK-U} & IT &889.20 &264.80 & 660.40  \\
                       & CPU &161.19 &59.57 & 225.18   \\
                       &RE&2.98e-06 &5.40e-06 & 4.82e-06 \\
        \hline
\multirow{3}{*}{ROR-BK} & IT &79.01 &29.50 & 49.35  \\
                       & CPU &87.73 &16.31 & 35.30   \\
                       &RE&2.47e-06 &4.54e-06 & 4.06e-06 \\
                       \hline

\multirow{2}{*}{SOBK:Speed-up} & IT &\textbf{8.92} & \textbf{9.10}& \textbf{9.28} \\
                       & CPU & \textbf{4.78}&\textbf{4.52} &  \textbf{3.96} \\
        \hline
\multirow{2}{*}{TA-ReBlocK-U:Speed-up} & IT &\textbf{11.25} &\textbf{8.97} & \textbf{13.38}   \\
                       & CPU & \textbf{1.83}&\textbf{3.65} &   \textbf{6.37} \\
        \hline
    \end{tabular}
    \caption{Highly over-determined dense systems with entries from $\mathcal{U}(1,2)$. The stopping criterion is based on the relative residual norm (RRN), defined as RRN=$\frac{\|b-A\tilde{x}\|_2}{\|b\|_2}< 1e-06$ , where $\tilde{x}$ denotes the computed solution. Relative Error (RE)=$\frac{\|\tilde{x}-x_\star\|_2}{\|x_\star\|_2}$, the true solution $x_\star$ being randomly generated with entries from $\mathcal{N}(0,1)$ distribution . `$\ddagger$': The method did not meet the stopping criterion within 2000 seconds. IT - Average number of iterations performed; CPU - Average computing time in seconds. Note that scaling factors between CPU time and iterations (IT) are a function of the block sizes in the particular example matrix.}
    \label{tab:dobk_vs_sobk_vs_rerbk_tab4}
\end{table}

Our experiments demonstrate that with respect to time, ROR-BK is \textbf{1.23} to \textbf{5.17} times faster than SOBK and \textbf{1.83} to \textbf{49.08} times faster than the TA-ReBlocK-U method in these example systems, highlighting the gains of the proposed method across a wide range of problems. While the speed-up of ROR-BK in the number of iterations is even more attractive, we have to note that SOBK has 3 block updates every iteration and the other two methods have 4 block updates in each.

\subsection{Comparison with Krylov solvers and example real-world applications} \label{sec:results_Krylov_solvers}

As a baseline for Krylov solvers, we consider LSQR \cite{lsqr} and AB/BA-GMRES. In both the AB- and BA-GMRES frameworks, we set $B = A^T$. To optimize for the computing efficiency given the ratio of dimensions of the matrix, underdetermined systems were addressed via the AB-GMRES formulation to solve the normal equations of the second kind ($AA^T y = b$), whereas overdetermined systems employed the BA-GMRES formulation to solve the normal equations of the first kind ($A^TA x = A^T b$). Because the satisfactory performance of Krylov solvers on highly ill-conditioned systems requires preconditioning, we evaluated three distinct preconditioning paradigms namely Jacobi, ILU, and RIF preconditioners \cite{11,12,gmres3,rif}. First, we utilized the unconditionally stable matrix-free Jacobi (diagonal equilibration) preconditioner for LSQR (referred as Jacobi-LSQR), and AB/BA-GMRES (referred as Jacobi-GMRES). To establish a robust preconditioning baseline without zero-pivot breakdowns inherent to severely rank-deficient matrices, we employed Tikhonov-regularized incomplete factorizations for ILU-GMRES. This configuration was geometrically adaptive: square systems utilized Incomplete LU with Threshold and Pivoting (ILUTP) with a regularizing micro-shift, while general rectangular systems dynamically deployed Left-Preconditioned AB-GMRES or BA-GMRES based on the ratio of matrix dimensions. Crucially, these rectangular GMRES preconditioners utilized a strictly zero-fill, diagonally shifted ILU(0). Because the explicitly formed shifted normal operators ($AA^T + \alpha I$ or $A^TA + \alpha I$) possess strict diagonal dominance, ILU(0) guarantees stable factorization; ignoring the computationally expensive dynamic pivoting of ILUTP here maximized the computational efficiency without sacrificing stability. Conversely, ILU-LSQR strictly requires an $n \times n$ right-preconditioner acting on the column space. For this solver, the preconditioner was constructed using Incomplete LU with Threshold and Pivoting (ILUTP) applied to $A^T A + \alpha I$. Unlike the GMRES formulation, this dynamic pivoting strategy was explicitly required to stabilize LSQR's bidiagonalization process against the severe ill-conditioning of the unpermuted operator, where standard zero-fill ILU inherently fails.
Finally, we include the Robust Incomplete Factorization (RIF) preconditioner, referred to as RIF-GMRES and RIF-LSQR when used with GMRES and LSQR, respectively. This is used to demonstrate the algorithmic fragility and catastrophic floating-point cancellation that frequently occurs when global approximate inverses amplify the null-space of ill-conditioned normal equations. 
 
The experimental set was divided into two tests of pure algebraic robustness using matrices with sparsity $\leq 50\%$ and different dimensions (\autoref{tab:rorbk_lsqr_gmres}), and the convergence evaluations on real-world tomographic data with measurement noise included as well, using the \texttt{paralleltomo} generator from the AIR Tools II package (\autoref{tab:ct_test}) \cite{air}. The tomographic operators ($A$) were constructed simulating a parallel-beam geometry with full detector coverage ($N_p = \lceil\sqrt{2}N\rceil$), for a true image of size $N \times N$. To stringently test the solvers' implicit regularization capabilities on severely underdetermined systems ($m \ll n$), we simulated extreme sparse-view CT scenarios by restricting the angular sampling.
For the test with \texttt{paralleltomo}, the experiments utilized the standard Shepp-Logan phantom discretized on a $1024 \times 1024$ grid. To simulate real-world sensor inconsistencies, the exact vector was corrupted with scaled Gaussian noise ($10^{-4}$ relative noise level) prior to reconstruction. This piecewise-constant synthetic dataset serves as a strict mathematical baseline to verify basic convergence behavior for a noisy system and algebraic robustness.

While such standard analytical phantoms provide excellent theoretical baselines, they lack the high-frequency gradients inherent to genuine physical anatomy. So to verify that the proposed method does not artificially over-perform on piecewise-constant regions, we experiment on another \emph{empirical} texture dataset named as \texttt{mri}. A high-resolution ($640 \times 640$) empirical human brain slice was extracted from the standard anonymized MRI sample dataset natively provided within the MATLAB environment \cite{matlab}, and mapped directly into the discrete reconstruction space. Crucially, to isolate the algorithm's capability to resolve complex biological gradients from the confounding effects of forward-model discretization error, the target sinogram was generated consistently within the discrete domain ($b=Ax_{\text{true}}$) before the addition of  stochastic noise of relative magnitude $10^{-4}$. By maintaining this discrete algebraic consistency, we guarantee that the resulting Peak Signal-to-Noise Ratio (PSNR) and Structural Similarity Index (SSIM) metrics exclusively reflect the solver's algorithmic resilience to severe data sparsity and noise, uncorrupted by structural model mismatch.

As mentioned, to simulate real-world measurement uncertainties in these discrete datasets, additive Gaussian noise is introduced in the vectors $b$.:
\begin{equation}
    b = Ax_\star + \frac{10^{-4} \|Ax_\star\|_2}{\|v_{noise}\|_2} \, v_{noise}
\end{equation}
where $v_{noise}$ is a standard normal Gaussian vector, and $x_\star$ being the true solution of the system prior to the noise addition. This controlled signal-to-noise ratio allows us to accurately track the true relative error and measure the implicit regularization of the algorithms prior to the onset of noise-fitting divergence.

To evaluate performance, we use a stopping tolerance based on relative error of $1e-2$ for the test cases in \autoref{tab:rorbk_lsqr_gmres} with a maximum iteration limit of 2000, and the practical experiments in \autoref{tab:ct_test} were allowed to exhaust the maximum iterations. When $A$ is fixed, we perform experiments using 25 different realizations of $b$. For experiments with randomly generated matrices $A$, we generate 5 different matrices, and for each matrix we use 25 different realizations of $b$, and the final averaged results are reported in tables. For the experiments as reported in \autoref{tab:rorbk_lsqr_gmres} the true solution $x_{\text{true}}$ is generated with entries drawn from $\mathcal{N}(0,1)$. As mentioned earlier, the computational cost per iteration of ROR-BK is $\mathcal{O}(mn)$, whereas for GMRES it grows with iteration number as $\mathcal{O}\big(mn + \min(l,n)\,ln\big)$,
where $l$ is the iteration count.

\begin{table}
    \centering
    \begin{tabular}{|c|c|c|c|c|c|c|}
        \hline
        Matrix & &Franz10&lp-pds-02  &  randint$_{\{0,1\}}$(20000,2000) & gemat11\\
        \hline
        Size & &19588×4164&2953×7716  & 20000 × 2000 & 4929 × 4929 \\
        \hline
        Density & & 0.12\%&0.07\%  & 50.007\% & 0.13\%\\
        \hline
        cond(A)& &1.27e+16&6.25e+15&  4.32e+01 & 5.95e+07\\
        \hline
        prob-cond & &1.13e+15 &5.62e+14 & 2.63e+00& 8.35e+05\\
        \hline

\multirow{3}{*}{AB/BA-GMRES} & IT & $\ddagger$ & $\ddagger$ & 14.5 & $\ddagger$  \\
                       & CPU &36.91 & 19.90 & 0.062 & 20.71 \\
                       &RE&5.44e-01 & 7.88e-01 & \textbf{6.55e-03} & 1.34e+00 \\
\hline
\multirow{3}{*}{Jacobi-GMRES} & IT & $\ddagger$ & $\ddagger$ & 12.7 & $\ddagger$  \\
                       & CPU &37.47 & 19.92 & 0.065 & 20.44 \\
                       &RE&4.83e-01 & 7.88e-01 & \textbf{6.55e-03} & 1.34e+00 \\
\hline
\multirow{3}{*}{ILU-GMRES} & IT & $\ddagger$ & $\ddagger$ & 4.2 &  $\ddagger$ \\
                       & CPU &25.68 & 16.70 & 1.26 & 21.10 \\
                       &RE& 2.24e+00& 7.88e-01 & \textbf{1.35e-03} & 5.82e-01 \\
\hline
\multirow{3}{*}{RIF-GMRES} & IT & $\ddagger$ & $\ddagger$ & 11.1 & $\ddagger$  \\
                       & CPU &141.40 & 45.75 & 26.55 & 93.78 \\
                       &RE&3.73e+00 & 7.88e-01& \textbf{9.94e-03} & 1.61e+00 \\
\hline
\multirow{3}{*}{LSQR} & IT & $\ddagger$ & $\ddagger$ & 12.33 &$\ddagger$   \\
                       & CPU & 16.09& 4.05 & 0.041 & 4.08 \\
                       &RE&2.38e+12 &  4.81e+10& \textbf{6.24e-03} & 1.94e-01  \\
\hline
\multirow{3}{*}{Jacobi-LSQR} & IT &$\ddagger$  &$\ddagger$  & 9.4 & $\ddagger$  \\
                       & CPU &16.01 & 4.05 & 0.05 & 4.07 \\
                       &RE& 7.48e+12&4.86e+10  & \textbf{6.24e-03} &  5.97e-01\\
\hline
\multirow{3}{*}{ILU-LSQR} & IT & $\ddagger$ &$\ddagger$ & 8.33 & $\ddagger$  \\
                       & CPU &16.13 & 4.06 & 0.047 &  4.02\\
                       &RE& 7.48e+12&4.86e+10 & \textbf{6.24e-03} & 5.97e-01 \\
\hline
\multirow{3}{*}{RIF-LSQR} & IT & $\ddagger$ & $\ddagger$ & $\ddagger$ & $\ddagger$  \\
                       & CPU & 127.63& 129.14 & 32.83 & 78.60 \\
                       &RE&4.27e+12 & 1.30e+11 & 2.27e+00 & 2.11e+01 \\
\hline
\multirow{3}{*}{ROR-BK} & IT & $\ddagger$ & 1723.2 & 4.2 & 1893.2  \\
                       & CPU &17.85 &3.89  & 0.045 & 3.93 \\
                       &RE& 3.90e-01& \textbf{9.87e-03}& \textbf{6.36e-03} & \textbf{9.79e-03} \\
\hline
    \end{tabular}
    \caption{Comparison of ROR-BK with LSQR, AB/BA-GMRES, and their preconditioned versions . The stopping criterion is based on the relative error (RE)=$\frac{\|x - x_\star\|_2}{\|x_\star\|_2} < 1e-2$, where \(x_\star\) denotes the true solution with entries from $\mathcal{N}(0,1)$. \(\ddagger\): The method failed to satisfy the stopping criterion within 2000 iterations. IT - Average number of iterations performed; CPU - Average computing time in seconds. randint$_{\{0,1\}}(m,n):$ $m \times n$ dimensional matrix with entries from $\{0,1\}$ with equal probability. Converged cases highlighted in bold. Note that the condition number of he problems reported does not apply for least-square methods solving the normal equations, where the condition numbers can be much higher.}
    \label{tab:rorbk_lsqr_gmres}
\end{table}

\begin{table}
\centering
\setlength{\tabcolsep}{3pt}

\begin{tabular}{|l|cccc|cccc|}
\hline

& \multicolumn{4}{c|}{\texttt{paralleltomo}}
& \multicolumn{4}{c|}{\texttt{mri}} \\
\hline
Size & \multicolumn{4}{c|}{$1448 \times 1048576$}
& \multicolumn{4}{c|}{$1812 \times 409600$} \\
\hline
Density & \multicolumn{4}{c|}{$0.07\%$}
& \multicolumn{4}{c|}{$0.11\%$ }\\
\hline
cond$(A)$ & \multicolumn{4}{c|}{$\infty$} 
& \multicolumn{4}{c|}{$\infty$} \\
\hline
prob-cond & \multicolumn{4}{c|}{$\infty$} 
& \multicolumn{4}{c|}{$\infty$} \\
\hline
Method
&  CPU & RE & PSNR(dB) & SSIM
&  CPU & RE & PSNR(dB) & SSIM \\
\hline

AB/BA-GMRES
&  25.53 & 5.71e+01 & -22.85 & 2.98e-01
&  15.66 & 1.29e+00 & 6.05 & 6.09e-03 \\
\hline

Jacobi-GMRES
&  25.65 & 2.77e+02 & -35.71 & 1.21e-03
&  15.56 & 2.09e+01 & -17.67 & 1.63e-04 \\
\hline

ILU-GMRES
&  22.01 & 6.41e+00 & -19.18 & 2.39e-01
&  14.33 & 1.49e+01 &-15.95  & 4.28e-05 \\
\hline

RIF-GMRES
&  27.25 & 8.21e-01 & 13.84 & 4.06e-01
&  16.30 & 7.09e+09 & -188.78 & -4.08e-21 \\
\hline

LSQR
&  13.74 & 7.92e-01 & 14.17 & 5.95e-01
&  6.58 & 5.56e-01 & 13.32 & 5.93e-01 \\
\hline

Jacobi-LSQR
&  13.48  & 7.92e-01 & 14.17 & 5.95e-01
&  6.71 & 5.56e-01 & 13.32 & 5.93e-01 \\
\hline

ILU-LSQR
&  57.09  &3.70e+02  & -39.23  & 3.12e-01
&  21.53 & 5.52e+02 &-46.60  & 3.36e-01 \\
\hline

SOBK
& 51.18 & 7.92e-01 & 14.17 & 5.95e-01
&  32.97 & 5.56e-01 & 13.32 & 5.93e-01 \\
\hline

TA-ReBlocK-U
& 128.48 & 7.92e-01 & 14.17 &5.95e-01
& 55.01 &  5.56e-01& 13.32 & 5.93e-01 \\
\hline

ROR-BK
&  15.96 & 6.12e-01 & 17.12 &6.65e-01
& 8.03 & 3.26e-01 & 17.32 & 6.41e-01 \\
\hline

\end{tabular}

\caption{
Image reconstruction: Comparison of ROR-BK with LSQR, AB/BA-GMRES, and their preconditioned versions and the block-Karcmarz methods SOBK, TA-ReBlocK-U. For severely underdetermined systems ($m << n$), directly applying RIF to right-precondition LSQR requires the construction of an $n \times n$ approximate inverse of $A^T A$, which is computationally prohibitive, and hence RIF-LSQR is not included in comparisons. All methods were stopped at 2000 iterations; CPU - Average computing time in seconds. In the above example, the matrices have a relatively smaller number of rows and we reduce the number of blocks to 10 for SOBK, TA-ReBlocK-U, and ROR-BK.}
\label{tab:ct_test}
\end{table}

\begin{figure*}
\centering

\begin{minipage}{0.16\textwidth}
\centering
\includegraphics[width=\linewidth]{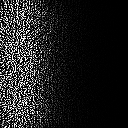}\\
{\small ILU-LSQR}
\end{minipage}
\hfill
\begin{minipage}{0.16\textwidth}
\centering
\includegraphics[width=\linewidth]{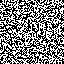}\\
{\small Jacobi-GMRES}
\end{minipage}
\hfill
\begin{minipage}{0.16\textwidth}
\centering
\includegraphics[width=\linewidth]{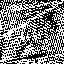}\\
{\small AB/BA-GMRES}
\end{minipage}
\hfill
\begin{minipage}{0.16\textwidth}
\centering
\includegraphics[width=\linewidth]{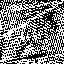}\\
{\small ILU-GMRES}
\end{minipage}
\hfill
\begin{minipage}{0.16\textwidth}
\centering
\includegraphics[width=\linewidth]{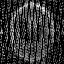}\\
{\small RIF-GMRES}
\end{minipage}
\hfill
\begin{minipage}{0.16\textwidth}
\centering
\includegraphics[width=\linewidth]{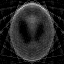}\\
{\small LSQR}
\end{minipage}

\vspace{2mm}

\begin{minipage}{0.18\textwidth}
\centering
\includegraphics[width=\linewidth]{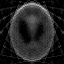}\\
{\small Jacobi-LSQR}
\end{minipage}
\hfill
\begin{minipage}{0.18\textwidth}
\centering
\includegraphics[width=\linewidth]{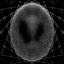}\\
{\small SOBK}
\end{minipage}
\hfill
\begin{minipage}{0.18\textwidth}
\centering
\includegraphics[width=\linewidth]{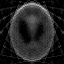}\\
{\small TA-ReBlocK-U}
\end{minipage}
\hfill
\begin{minipage}{0.18\textwidth}
\centering
\includegraphics[width=\linewidth]{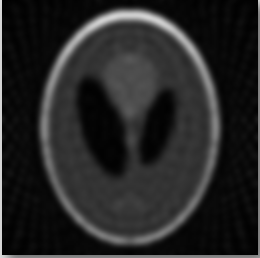}\\
{\small ROR-BK}
\end{minipage}
\hfill
\begin{minipage}{0.18\textwidth}
\centering
\includegraphics[width=\linewidth]{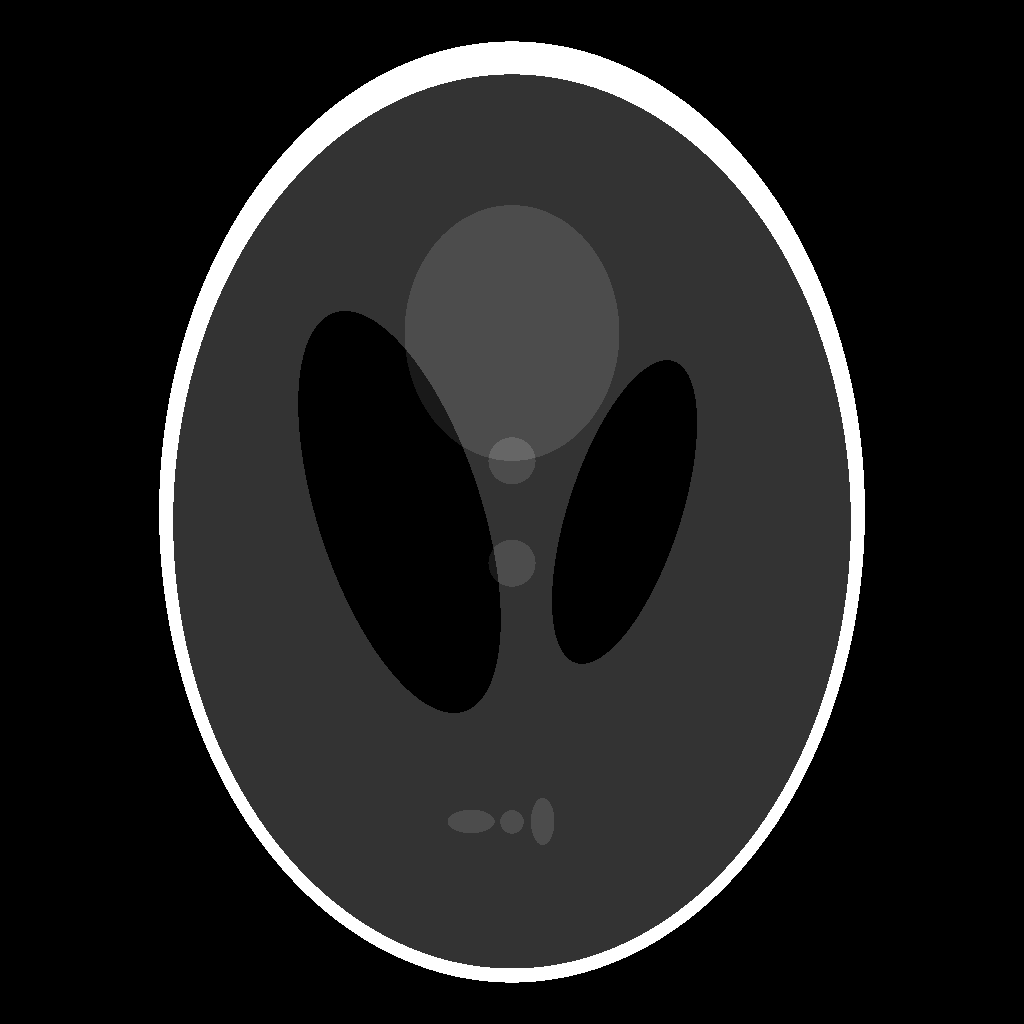}\\
{\small True Image}
\end{minipage}
\vspace{2mm}


\caption{Reconstruction of \texttt{paralleltomo} with an image resolution of $128 \times 128$}
\label{fig:paralleltomo_image}
\end{figure*}

\begin{figure*}
\centering

\begin{minipage}{0.16\textwidth}
\centering
\includegraphics[width=\linewidth]{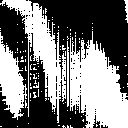}\\
{\small RIF-GMRES}
\end{minipage}
\hfill
\begin{minipage}{0.16\textwidth}
\centering
\includegraphics[width=\linewidth]{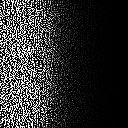}\\
{\small ILU-LSQR}
\end{minipage}
\hfill
\begin{minipage}{0.16\textwidth}
\centering
\includegraphics[width=\linewidth]{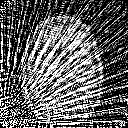}\\
{\small Jacobi-GMRES}
\end{minipage}
\hfill
\begin{minipage}{0.16\textwidth}
\centering
\includegraphics[width=\linewidth]{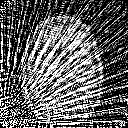}\\
{\small ILU-GMRES}
\end{minipage}
\hfill
\begin{minipage}{0.16\textwidth}
\centering
\includegraphics[width=\linewidth]{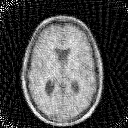}\\
{\small AB/BA-GMRES}
\end{minipage}
\hfill\begin{minipage}{0.16\textwidth}
\centering
\includegraphics[width=\linewidth]{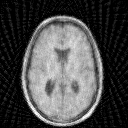}\\
{\small LSQR}
\end{minipage}

\vspace{2mm}

\begin{minipage}{0.18\textwidth}
\centering
\includegraphics[width=\linewidth]{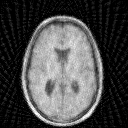}\\
{\small Jacobi-LSQR}
\end{minipage}
\hfill
\begin{minipage}{0.18\textwidth}
\centering
\includegraphics[width=\linewidth]{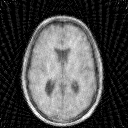}\\
{\small SOBK}
\end{minipage}
\hfill
\begin{minipage}{0.18\textwidth}
\centering
\includegraphics[width=\linewidth]{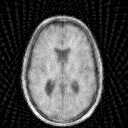}\\
{\small TA-ReBlocK-U}
\end{minipage}
\hfill
\begin{minipage}{0.18\textwidth}
\centering
\includegraphics[width=\linewidth]{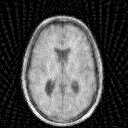}\\
{\small ROR-BK}
\end{minipage}
\hfill
\begin{minipage}{0.18\textwidth}
\centering
\includegraphics[width=\linewidth]{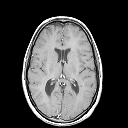}\\
{\small True Image}
\end{minipage}
\vspace{2mm}


\caption{Reconstruction of \texttt{mri} with an image resolution of $128 \times 128$}
\label{fig:mri_image}
\end{figure*}

\section{Conclusion}\label{sec:conclusion}

In this paper, we introduced a block-Kaczmarz algorithm that leverages the concept of orthogonality of a block with all the other blocks, regularization in each iteration for better stability, and sampling of rows based on the current residual into a dynamic block in each iteration, to solve highly ill-conditioned linear systems without preconditioning. This technique is suitable for a wide range of ill-conditioned problems, including square, underdetermined ($m < n$), and overdetermined ($m > n$) cases, and it outperforms recently developed block-Kaczmarz algorithms and the well known Krylov subspace solvers. We also provide a convergence analysis of the improvements in the proposed method. It provides notable gains over the other known methods in sparse systems where effective orthogonality of the blocks is high. It also provides such large gains for highly ill-conditioned dense systems with skewed dimensions, where the regularization of block solutions mitigates possible numerical rank deficiency. The updates to solution based on the residual based dynamic blocks also improve the rate of convergence. In addition to its use as the primary solver, it can also be introduced as a \emph{pre-solver} for widely used iterative methods like CG, GMRES for ill-conditioned problems if needed. Furthermore, by employing this approach as a preconditioner in the inner iteration, the FABGMRES method for solving consistent linear systems may be significantly improved.

\clearpage
\noindent \textbf{Author declarations}:

\vspace{3mm}
\noindent \textbf{Funding}: Murugesan Venkatapathi acknowledges the support of the Science and Engineering Research Board (SERB) grant CRG/2022/004178 in performing this research.

\vspace{3mm}
\noindent \textbf{Conflicts of interest}: The authors do not have any competing financial or non-financial interests to declare.

\bibliographystyle{siamplain}
\bibliography{reference1}

\noindent \section{Appendix}\label{sec:appendix}

\subsection{Initial Solutions}

Let us now derive a computationally efficient initial solution $x_0 \in \mathcal{R}(A^T)$, such that $\frac{\|b-Ax_0\|}{\|b\|} \le 1$.

\begin{algorithm}
\caption{Initial Solution}\label{algo:initial_solution}
\begin{algorithmic}[1]
\Require $A,b$
\Ensure $x_0$
\State Compute $y \leftarrow$ sum of a set of rows in $A$.
\State $\tilde{b}\leftarrow Ay$
\State $x_0 \leftarrow \frac{\langle b, \tilde{b} \rangle}{\|\tilde{b}\|_2^2}y$
\State  Return $x_0$
\end{algorithmic}
\end{algorithm}

\begin{Remark}
\label{thm:initial_solution}
    The initial solution $x_0$ from Algorithm \autoref{algo:initial_solution} is in $\mathcal{R}(A^{T})$, and $\frac{\|b-Ax_0\|}{\|b\|} \le 1$.
    
\end{Remark}

\begin{proof}
We construct the vector $y$ as the sum of a set of rows in A. Thus, $y \in \mathcal{R}(A^{T})$ and we have $\tilde{b}=Ay$. Now, $b$ can be decomposed into two components, one in the space of $\tilde{b}$, and another orthogonal to $\tilde{b}.$
\begin{align}
    b=\frac{\langle b, \tilde{b}\rangle}{\|\tilde{b}\|_2^2}\tilde{b}+(b-\frac{\langle b, \tilde{b}\rangle}{\|\tilde{b}\|_2^2}\tilde{b}) \label{eq:decomp_b}
\end{align}
    
The second term in \autoref{eq:decomp_b} is orthogonal to $\tilde{b}$. Given $\|b\|_2^2=\|\frac{\langle b, \tilde{b}\rangle}{\|\tilde{b}\|_2^2}\tilde{b}\|_2^2+\|(b-\frac{\langle b, \tilde{b}\rangle}{\|\tilde{b}\|_2^2}\tilde{b})\|_2^2$,
we have $\frac{\|b-Ax_0\|_2^2}{\|b\|_2^2} = \frac{\|(b-\frac{\langle b, \tilde{b}\rangle}{\|\tilde{b}\|_2^2}\tilde{b})\|_2^2}{\|b\|_2^2} \le 1$. Also, as $y \in \mathcal{R}(A^{T})$, $x_0=\frac{\langle b, \tilde{b}\rangle}{\|\tilde{b}\|_2^2}y \in \mathcal{R}(A^{T}).$
\end{proof}

\textbf{Note:} The number of arithmetic operations required to evaluate this initial solution $x_0$ through Algorithm \autoref{algo:initial_solution} is $O(mn)$, and note that $y$ can be randomly constructed such that $\tilde{b} \neq 0$, and $\tilde{b}$ is not orthogonal to $b$.

\subsection{Convergence for a weighted least squares problem for the blocks}
Let us denote the sampling procedure of \autoref{algo:roar-bk} as $Q$. One can show that the expected convergence of such a method for a weighted least-square solution with a matrix $W$ representing the weights of the blocks. Let, $S$ be the set of row-indices to form a block $A_S$ from A. Denote $M(A_S)=(A_SA_S^T+\mu I)^{-1}, W(S)=I_S^TM(A_S)I_S, \quad \text{and} \quad P(S)=A_S^TM(A_S)A_S$, $I_S$ being the sub-matrix of $m \times m$ identity matrix with the rows of indices from $S$. Let 
\begin{equation}\label{eq:w_p}
  \bar{W}=\mathbb{E}_{S \sim Q}[W(S)], \text{and} \quad \bar{P}=  \mathbb{E}_{S\sim Q}[P(S)]
\end{equation}
Let's define a weighted minimum norm solution and weighted residual as follows.
\begin{equation}\label{define_xr}
x^{(Q)}=\text{arg}\text{min}_{x\in \mathbb{R}^{n\times 1}} \|Ax-b\|_{\bar{W}}^2, \text {and} \quad r^{(\mu)}=b-Ax^{(Q)}
\end{equation}

\begin{theorem}\label{thm:dobk_for_least_square_problems}
    \textit{Consider the ROR-BK algorithm, namely \autoref{algo:roar-bk} with $M(A_S) = (A_S A_S^\top + \mu I)^{-1}$ and $Q$ be the sampling rule defined earlier. Let $\alpha = \sigma^+_{\min}(\bar{P})$ and assume $x_0 \in \mathcal{R}(A^T)$. Then the expectation of the ROR-BK iterates $x_T$ converges to $x^{(\mu)}$ as}
\begin{equation}
\left\lVert \mathbb{E}[x_T] - x^{(Q)} \right\rVert 
\leq (1 - \alpha)^T \left\lVert x_0 - x^{(Q)} \right\rVert.
\end{equation}
$\sigma^+_{\min}(\bar{P})$ being the minimum non-zero singular value of $\bar{P}.$
\end{theorem}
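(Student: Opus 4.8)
The plan is to recognize the ROR-BK iteration (in its expectation) as a fixed-point iteration for a consistent linear system in the weighted geometry, and to track the error in the range space $\mathcal{R}(A^T)$. First I would write out the single-block update of \cref{algo:roar-bk} in the compact form $x_{t+1} = x_t + A_S^T M(A_S)(b_S - A_S x_t) = x_t + A_S^T M(A_S) I_S(b - A x_t)$, using $b_S = I_S b$ and $A_S = I_S A$. Taking expectation over $S \sim \mu$ and using the definitions $\bar P = \mathbb{E}[A_S^T M(A_S) A_S]$ and the "cross" expectation $\bar Q := \mathbb{E}[A_S^T M(A_S) I_S b]$, one gets $\mathbb{E}[x_{t+1} \mid x_t] = (I - \bar P)x_t + \bar Q$; iterating (using a tower/conditioning argument as in the proof of \cref{thm:err_bound}) gives $\mathbb{E}[x_T] = (I-\bar P)^T x_0 + \sum_{j=0}^{T-1}(I-\bar P)^j \bar Q$.

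The second step is to identify the fixed point. The weighted least-squares solution $x^{(\mu)}$ of \eqref{define_xr} satisfies the normal equations $A^T \bar W A\, x^{(\mu)} = A^T \bar W b$. The key algebraic identity I would establish is that $\bar P = A^T \bar W A$ and $\bar Q = A^T \bar W b$: indeed $P(S) = A_S^T M(A_S) A_S = A^T I_S^T M(A_S) I_S A = A^T W(S) A$, and similarly $A_S^T M(A_S) I_S b = A^T W(S) b$; taking expectations and using \eqref{eq:w_p} gives both identities. Hence $\bar P x^{(\mu)} = \bar Q$, so $x^{(\mu)}$ is a fixed point of the map $x \mapsto (I-\bar P)x + \bar Q$, and therefore $\mathbb{E}[x_T] - x^{(\mu)} = (I - \bar P)^T (x_0 - x^{(\mu)})$.

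The third step is the contraction estimate. Since $x_0 \in \mathcal{R}(A^T)$ and every update adds a vector in $\mathcal{R}(A^T)$ (the argument of \cref{thm:x_in_range_A_transpose}), and since $x^{(\mu)}$ can be taken in $\mathcal{R}(A^T)$ as the minimum-norm weighted solution, the error vector $x_0 - x^{(\mu)}$ lies in $\mathcal{R}(A^T)$, which is invariant under $I - \bar P$ (because $\bar P = A^T\bar W A$ maps everything into $\mathcal{R}(A^T)$, and $\mathcal{R}(A^T) = \mathcal{N}(A)^\perp$ is complementary to $\ker \bar P \supseteq \mathcal{N}(A)$). On this subspace, $\bar P$ acts with eigenvalues in $(0,1]$ — positivity of the lower bound being exactly $\alpha = \sigma^+_{\min}(\bar P) > 0$, and the upper bound following from $\bar P = \mathbb{E}[P(S)]$ with each $P(S) = A_S^T(A_S A_S^T + \tilde\lambda I)^{-1} A_S \preceq I$ (it is a "regularized projection", with eigenvalues $\lambda/(\lambda + \tilde\lambda) \le 1$; one should check the convention on $\mu$ so that the probabilities sum to one). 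Therefore $\|(I-\bar P)^T v\| \le (1-\alpha)^T \|v\|$ for $v \in \mathcal{R}(A^T)$, which yields the claimed bound.

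I expect the main obstacle to be the careful handling of the restriction to $\mathcal{R}(A^T)$: one must verify that $x^{(\mu)}$ is well-defined (the weighted normal equations are consistent, which holds since $\mathcal{R}(A^T\bar W A) = \mathcal{R}(A^T\bar W b)$ when $b$ is in the appropriate span, or one simply defines $x^{(\mu)}$ as the minimum-norm solution) and that $I - \bar P$ is genuinely a strict contraction there rather than merely on all of $\mathbb{R}^n$, where it has eigenvalue $1$ on $\mathcal{N}(A)$. A secondary subtlety is confirming the spectral bound $\bar P \preceq I$ when $\mu$ is a genuine probability distribution over blocks (so that $\mathbb{E}$ is an average, not a sum), which is needed to rule out eigenvalues of $I - \bar P$ outside $[0,1)$ on the relevant subspace; the residue-based dynamic block in \cref{algo:roar-bk} must be folded into the sampling rule $\mu$ or argued to preserve $\mathcal{R}(A^T)$ and the contraction, consistent with how $\mu$ was defined in the paragraph preceding the theorem.
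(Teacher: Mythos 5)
Your argument is essentially a self-contained reconstruction of the proof that the paper does not spell out: the paper's entire ``proof'' of \cref{thm:dobk_for_least_square_problems} is a one-line deferral to Theorem 4.1 of the cited regularized block-Kaczmarz reference. The route you take --- writing the update as $x_{t+1} = x_t + A^T W(S)(b - Ax_t)$, taking conditional expectation to obtain the affine fixed-point iteration $\mathbb{E}[x_{t+1}\mid x_t] = (I-\bar P)x_t + \bar Q$ with $\bar P = A^T\bar W A$ and $\bar Q = A^T \bar W b$, identifying $x^{(\mu)}$ as a fixed point via the weighted normal equations, and contracting on $\mathcal{R}(A^T)$ using $0 \preceq P(S) \preceq I$ --- is exactly the mechanism behind the cited result, so in substance you have supplied the argument the paper omits. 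Your attention to the two restriction issues (that $x^{(\mu)}$ must be taken as the minimum-norm weighted solution so that $x_0 - x^{(\mu)} \in \mathcal{R}(A^T)$, and that $I - \bar P$ is a strict contraction only on $\mathcal{R}(A^T)$, having eigenvalue $1$ on $\mathcal{N}(A)$) is correct and necessary.

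The one genuine gap --- which you flag but do not close, and which the paper's citation does not close either --- is the treatment of the residue-based block in \cref{algo:roar-bk}. There the index set $\mathcal{I}$ consists of the rows with the largest current residuals, so the sampled set $S$ depends on $x_t$; the factorization $\mathbb{E}\left[A^T W(S)(b - Ax_t)\right] = A^T \bar W\, \mathbb{E}\left[b - Ax_t\right]$ used in your tower argument (and implicitly in the cited Theorem 4.1, which assumes a sampling distribution independent of the iterate) is then not valid for that sub-step. As written, the theorem is justified only for the iterate-independent part of the sampling, namely the three orthogonality-sampled block updates per outer iteration; to cover the full algorithm one must either absorb the greedy block into a genuinely iterate-independent distribution $\mu$, or prove a separate monotonicity statement for the greedy step (in the spirit of \cref{thm:convergence_boost}) and compose the two estimates. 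Stating this restriction explicitly would make your proof complete for the part of the algorithm it actually covers.
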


\begin{proof}
    The proof follows from the Theorem 4.1 in \cite{ReRBK}. Appendix at \cite{ReRBK} outlines the analysis of the convergence of $x^{(Q)}$ to $x_\star$ as $m \rightarrow \infty$.
\end{proof}

\begin{algorithm}
\caption{Flexible AB-GMRES with ROR-BK as a preconditioner}\label{algo:fabgmres_dobk}
\begin{algorithmic}[1]
\State Perform uniform aggregation of rows of $A$ and $b$ in sequential order, and obtain $k$ blocks $A_i$ and $b_i$, $i=1,2,\ldots,k$
\State Compute the cosine $C(i,j)$ of the angle between blocks $A_i$ and $A_j$ by the centroid coordinates and the corresponding symmetric matrix $C$.
\State Using entries of $C$, construct probability distribution $\mathcal{P}$.
\State For initial solution $x_0$ compute $r_0 = b - Ax_0$.
\State $\beta = \|r_0\|_2$, $v_1 = r_0/\beta$
\For{$k = 1,2,\ldots$ until convergence}
    \State Apply $\ell_{\max}$ iterations of ROR-BK method to $Az = v_k$ to obtain $z_k = B^{(\ell)} v_k$, where $\ell_{\max}$ is the maximum number of inner iterations allowed for a relative error tolerance $\eta$.
    \[
    \|v_k - AB^{(\ell)}v_k\|_2 \leq \eta \|v_k\|_2.
    \]
    \State $w_k = Az_k$
    \For{$i = 1,2,\ldots,k$}
        \State $h_{i,k} = w_i^T v_k$, \quad $w_k = w_k - h_{i,k}v_i$
    \EndFor
    \State $h_{k+1,k} = \|w_k\|_2$, \quad $v_{k+1} = w_k / h_{k+1,k}$
\EndFor
\State $y_k = \arg\min_{y \in \mathbb{R}^k} \|\beta e_1 - \tilde{H}_k y\|_2$, \quad $u_k = [z_1, z_2, \ldots, z_k]y_k$,\\ where $\tilde{H}_k = \{h_{i,j}\}_{i \in [k+1], j \in [k]}$
\State $x_k = x_0 + u_k$
\end{algorithmic}
\end{algorithm}

\end{document}